\newtheorem{theorem}{Theorem}[section]
\newtheorem{lemma}[theorem]{Lemma}
\newtheorem{proposition}[theorem]{Proposition}
\theoremstyle{remark}
\newtheorem{remark}{Remark}[section]
\def\pa{\partial}
\def\rhob{\widetilde{\rho}}
\def\mrhob{{-\widetilde{\rho}}}
\def\R{\mathbb{R}}
\def\rhoM{\rho_M}
\def\rhoc{\rho_c}
\def\d{\mathrm{d}}
\def\eps{\varepsilon}
\begin{document}

\title[Nonlocal ARZ models]{
  On the Aw-Rascle-Zhang traffic models with nonlocal look-ahead interactions}

\author[Thomas Hamori]{Thomas Hamori}
\address[Thomas Hamori]{\newline Department of Mathematics, \ 
 University of South Carolina, Columbia, SC 29208, USA}
\email{thamori@email.sc.edu}

\author[Changhui Tan]{Changhui Tan}
\address[Changhui Tan]{\newline Department of Mathematics, \ 
 University of South Carolina, Columbia, SC 29208, USA}
\email{tan@math.sc.edu}

\date{\today}
\subjclass[2010]{76A30, 35L65, 35B65, 35B51.}
\keywords{nonlocal conservation law, traffic flow, second-order model, global regularity, critical threshold}
\thanks{\textit{Acknowledgment.} This work has been supported by the NSF grants DMS-2108264 and DMS-2238219.}

\begin{abstract}
We present a new family of second-order traffic flow models, extending the Aw-Rascle-Zhang (ARZ) model to incorporate nonlocal interactions. Our model includes a specific nonlocal Arrhenius-type look-ahead slowdown factor. We establish both local and global well-posedness theories for these nonlocal ARZ models.

In contrast to the local ARZ model, where generic smooth initial data typically lead to finite-time shock formation, we show that our nonlocal ARZ model exhibits global regularity for a class of smooth subcritical initial data. Our result highlights the potential of nonlocal interactions to mitigate shock formations in second-order traffic flow models.

Our analytical approach relies on investigating phase plane dynamics. We introduce a novel comparison principle based on a mediant inequality to effectively handle the nonlocal information inherent in our model.
\end{abstract}

\maketitle

\section{Introduction}\label{sec:intro}

\subsection{Macroscopic traffic flow models}

For the past century, the study of traffic flow has been of continued interest to mathematicians investigating the dynamics of vehicles on roadways using a variety of models. One celebrated framework is that of first-order macroscopic traffic flow models, in which one describes the evolution of the traffic density
\begin{equation}\label{eq:basic}
 \pa_t \rho + \pa_x\big(\rho u\big)=0, \quad u=U(\rho).
\end{equation}
Here, the velocity $u$ depends on the local density $\rho$. The speed-density relationship function $U$ is decreasing in $\rho$. Through a scaling argument, we may assume without loss of generality that the maximum density $\rho_{\max}=1$ and the maximum velocity $u_{\max}=1$. Hence, the general assumptions on $U$ reads:
\begin{equation}\label{eq:Ucond}
U(0) = u_{\max}=1, \quad U(1) = U(\rho_{\text{max}}) = 0, \quad\text{and}\quad U'(\rho)<0,\quad\forall\rho \in (0,1).
\end{equation}

One classical choice for the function $U$ proposed by Greenshields \cite{greenshields1935study} is the linear relation
\begin{equation}\label{eq:Ulinear}
U(\rho)=1-\rho.	
\end{equation}
The corresponding dynamics \eqref{eq:basic} is known as the Lighthill-Witham-Richards (LWR) model \cite{lighthill1955kinematic, richards1956shock}.
The LWR model effectively captures various real-world traffic phenomena, including shock formations. 

Equation \eqref{eq:basic} can be interpreted as a scalar conservation law:
\[
\pa_t\rho + \pa_x\big(f(\rho)\big)=0, \quad f(\rho)=\rho\, U(\rho).
\]
Here, $f$ is referred to as the \emph{flux} function.
Classical theory on hyperbolic conservation laws provides a robust well-posedness theory for such systems. See e.g. the book of Dafermos \cite{dafermos2016hyperbolic}. In particular, any generic smooth initial data will inevitably lead to shock formations within finite time.

The LWR model exhibits several notable deficiencies. First of all, the flux function 
\begin{equation}\label{eq:fluxLWR}
f(\rho)=\rho(1-\rho)	
\end{equation}
is concave and symmetric in $[0,1]$. However, empirical observations suggest that the flux function (often referred to as the fundamental diagram) tends to be concave in the low-density regime and convex in the high-density regime, see e.g. \cite{del1995functional, li2011fundamental}. To better align with empirical data, we consider a family of concave-convex flux functions:
\begin{equation}\label{eq:fluxcc}
	f''(\rho)<0,\,\, \text{for}\,\,\rho\in[0,\rho_c),\quad\text{and}\quad
	f''(\rho)>0,\,\, \text{for}\,\,\rho\in(\rho_c,1),
\end{equation}
where $\rho_c$ is the inflection point that distinguishes the two regimes. Note that concave fluxes can be accommodated within this family \eqref{eq:fluxcc} by setting $\rho_c=1$.

One prototypical example of flux functions belonging to the family \eqref{eq:fluxcc}, as proposed by Pipes \cite{pipes1966car}, is given by:
\begin{equation}\label{eq:fluxJ}
    f(\rho) = \rho(1-\rho)^J,\quad J\geq 1,
\end{equation}
where the velocity function is nonlinear:
\begin{equation}\label{eq:Unonlinear}
	U(\rho) = (1-\rho)^J.
\end{equation}
When the parameter $J=1$, it resembles the flux in the LWR model \eqref{eq:fluxLWR}. When $J>1$, the flux is concave-convex, with $\rho_c=\frac{2}{J+1}$. This family of fluxes has recently been derived from microscopic cellular automata models in \cite{sun2020class}.

Another deficiency of the LWR model lies in its unrealistic assumption that drivers can instantaneously adjust their velocity. This limitation prompted the exploration of second-order models, where the velocity $u$ has its own dynamics. These models, often referred to as \emph{non-equilibrium} models, allow for the velocity $u$ to deviate from the equilibrium velocity $U(\rho)$.
Quintessential examples of second-order models include the Payne-Witham (PW) model \cite{payne1971model,whitham2011linear}, and the Aw-Rascle-Zhang (ARZ) model \cite{aw2000resurrection,zhang2002non}.

Our primary focus lies with the ARZ model. Aw and Rascle \cite{aw2000resurrection} introduced the following second-order model:
\begin{equation}\label{eq:AR}
    \begin{cases}
     \pa_t\rho+\pa_x(\rho u) = 0, \\    
    \pa_t(u+p(\rho))+u\pa_x(u+p(\rho))=0.
    \end{cases}
\end{equation}
In this model, the function $p(\rho)$ represents the \emph{traffic pressure}, which can also be interpreted as an anticipation factor or hesitation function. This function $p$ is an increasing function of $\rho$. A typical example of the pressure function is given by the $\gamma$-law: $p(\rho)=\rho^\gamma$.

Independently, Zhang \cite{zhang2002non} introduced a second-order model
\begin{equation}\label{eq:Z}
    \begin{cases}
     \pa_t\rho+\pa_x(\rho u) = 0, \\    
     \pa_tu + (u + \rho U'(\rho))\pa_x u = 0.
    \end{cases}
\end{equation}
This model represents an improvement over the PW model, addressing the unrealistic phenomenon where traffic waves travel faster than individual vehicles.

Remarkably, the two systems \eqref{eq:AR} and \eqref{eq:Z} are equivalent, under the relation:
\begin{equation}\label{eq:pUrelation}
p(\rho)=1-U(\rho).	
\end{equation}
In particular, the isothermal pressure $p(\rho)=\rho$ corresponds to the linear relation \eqref{eq:Ulinear}.

It is convenient for us to introduce an auxiliary quantity $\psi$, defined as:
\[ \psi =  u-U(\rho),\]
which represents the deviation of the velocity $u$ from the equilibrium $U(\rho)$. It can be readily verified from \eqref{eq:AR} that $\psi$ is transported by the velocity $u$. Consequently, we can represent the ARZ model \eqref{eq:AR} or \eqref{eq:Z} as follows:
\begin{equation}\label{eq:ARZ}
    \begin{cases}
     \pa_t\rho+\pa_x(\rho u) = 0, \\    
     \pa_t\psi + u \pa_x \psi = 0,\\
     u = \psi+U(\rho).
    \end{cases}
\end{equation}

An advantageous property of the ARZ model is its consistency with the first-order model \eqref{eq:basic}. Indeed, if we take the initial data $\psi_0\equiv0$, then $\psi$ remains zero for all time, and the ARZ model \eqref{eq:ARZ} reduces to the first-order model \eqref{eq:basic}. For general initial data, For general initial data, one may introduce a relaxation term to the dynamics of $\psi$:
\[\pa_t\psi+u\pa_x\psi=-\tfrac{1}{\tau}\psi,\]
where the scaling parameter $\tau>0$ serves as the relaxation coefficient. With the relaxation term, $\psi$ vanishes as $t\to\infty$. Consequently, the velocity $u$ converges to the equilibrium $U(\rho)$ in the long run. Therefore, the asymptotic behavior of the second-order model matches that of the first-order model \eqref{eq:basic}.

The ARZ model \eqref{eq:ARZ}, whether with or without relaxation, has been extensively analyzed in recent literature; see, for instance, \cite{li2003global, lebacque2007aw, garavello2006traffic, yu2019traffic, chaudhuri2023analysis}. Similar to the first-order model, solutions of the ARZ model exhibit shock formations in finite time.

\subsection{Nonlocal traffic flow models}
An emerging domain of exploration in mathematical models for traffic flow involves the assimilation of nonlocal effects, inspired by the growing availability of nonlocal information to drivers (e.g. smartphone map systems that report downstream traffic conditions). These models operate under the assumption that drivers, whether autonomous or not, adjust their speed based on downstream traffic information. The mechanism by which this effect is constructed varies within the literature. A common approach is to introduce the system with the nonlocal quantity 
\begin{equation*}\label{eq:rhobar}
\rhob= \int_0^\infty w(z) \rho(x+z)dz,
\end{equation*}
which represents nonlocal traffic mass with a weight $w$. Typical assumptions regarding the weight function $w$ include:
\begin{equation}\label{eq:w}
	\quad w(z)\geq0,\quad w(z) \text{ is bounded and non-increasing on }[0,\infty). 
\end{equation}
Thus, drivers react to the traffic conditions ahead, with closer traffic being weighted more heavily.

A class of first-order nonlocal traffic flow models is introduced in the form:
\begin{equation}\label{eq:basicn}
\pa_t\rho+\pa_x(\rho u) = 0, \quad u = \widetilde{U}(\rho,\rhob),
\end{equation}
where the velocity function $\widetilde{U}$ depends nonlocally on the density $\rho$. This system \eqref{eq:basicn} has been extensively studied within the framework of nonlocal conservation laws, including research on well-posedness theory \cite{blandin2016well,goatin2016well,keimer2017existence, keimer2018nonlocal, chiarello2018global, huang2022stability}, asymptotic behaviors \cite{ridder2019traveling, shen2019traveling}, the finite time shock formations \cite{li2011shock,lee2015thresholds, lee2019thresholds, lee2020wave}, and connections to microscopic models \cite{di2019deterministic,sun2023accelerated}.

One interesting choice of the velocity function is
\[\widetilde{U}(\rho,\rhob)=U(\rhob).\]
Numerous studies have focused on the singular limit from the nonlocal equation \eqref{eq:basicn} to the local equation \eqref{eq:basic} by selecting a sequence of weights that approximate the Dirac delta function, such as $w_\eps = \eps^{-1}e^{-s/\eps}$. For further insights, refer to works such as \cite{bressan2020traffic,bressan2021entropy,coclite2022general, colombo2023nonlocal}.

Another notable choice for the velocity function, proposed by Sopasakis and Katsoulakis \cite{sopasakis2006stochastic}, and of particular interest to us, is given by:
\begin{equation}\label{eq:lookahead}
\widetilde{U}(\rho,\rhob)=U(\rho)e^{-\rhob}.	
\end{equation}
In this formulation, the nonlocal input takes the form of an Arrhenius-type look-ahead slowdown factor, representing the concept that heavy traffic ahead leads to a decrease in speed. The system \eqref{eq:basicn} with \eqref{eq:lookahead} has been analyzed in \cite{lee2022sharp, hamori2023sharp}. A significant discovery arising from these studies is that nonlocal look-ahead interactions help prevent the formation of shocks, and subcritical initial data lead to globally smooth solutions. The result suggests that nonlocal interactions can play a crucial role in mitigating the occurrence of traffic congestion.

The understanding of second-order nonlocal traffic flow models is relatively less developed, but it has attracted increasing attention in recent years. A notable contribution is the work by Chiarello et al. in \cite{chiarello2020micro}, where they introduced a generalized ARZ model by incorporating nonlocal dependence on downstream velocity. In their formulation, the third equation in the ARZ model \eqref{eq:ARZ} is replaced by:
\[u(t,x) = \int_\R w(z) \Big(\psi(t,x+z)+U(\rho(t,x+z))\Big)\,dz,\]
with a continuous weight function $w$ satisfying \eqref{eq:w}. They also derive their nonlocal ARZ model from microscopic follow-the-leader dynamics.

\subsection{An ARZ model with nonlocal look-ahead interactions}
In this paper, we propose a new class of second-order nonlocal traffic models. The system is given by:
\begin{equation}\label{eq:NARZ}
    \begin{cases}
     \pa_t\rho+\pa_x(\rho u) = 0, \\    
     \pa_t\psi + u \pa_x \psi = 0,\\
     u = \psi + \widetilde{U}(\rho,\rhob).
    \end{cases}
\end{equation}
This model presents a nonlocal variant of the ARZ model \eqref{eq:ARZ}. We introduce  nonlocality in the third equation, by replacing $U(\rho)$ with the nonlocal velocity function $\widetilde{U}(\rho,\rhob)$.

A notable distinction between our model and the one presented in \cite{chiarello2020micro} is that our model incorporates nonlocal information on the downstream density. Such information is considerably easier to measure compared to the downstream velocity required in \cite{chiarello2020micro}.

Another advantage of our model is its consistency with the first-order nonlocal traffic model \eqref{eq:basicn}. Similar to the local ARZ model, setting $\psi_0\equiv0$ will reduce \eqref{eq:NARZ} to \eqref{eq:basicn}. Hence, some analytical features of the first-order model may be extended to the second-order model.

Our main goal is to establish a global well-posedness theory for our nonlocal ARZ model \eqref{eq:NARZ}. Unlike first-order models, which are scalar conservation laws, the well-posedness theory for second-order models, being \emph{systems} of conservation laws, is considerably less understood. It is widely recognized that entropic weak solutions are not necessarily unique, even for local models. Uniqueness results for nonlocal systems of conservation laws are limited, see e.g. \cite{brenier2013sticky, nguyen2015one, leslie2021sticky}.

In this paper, we study the well-posedness theory for \emph{smooth} solutions, with a particular focus on analyzing the look-ahead slowdown interaction \eqref{eq:lookahead} and understanding its regularization effect, similar to first-order models. We identify a class of subcritical initial data, and obtain global smooth solutions to the initial value problem. 

The major analytical tool we use is the phase plane analysis along characteristic paths. This approach captures the \emph{critical threshold phenomenon}: subcritical initial data lead to global regularity, while supercritical initial data result in finite-time shock formation. This method has proven successful in establishing global well-posedness for numerous nonlocal conservation laws \cite{liu2001critical,tadmor2003critical,tadmor2014critical,tan2020euler, tan2021eulerian, bhatnagar2023critical}, including the first-order nonlocal traffic flow models \cite{lee2015thresholds,lee2019thresholds,lee2020wave,lee2022sharp,hamori2023sharp}.

The structure of our second-order nonlocal model \eqref{eq:NARZ} presents additional challenges in the phase plane analysis. In particular, we need to establish appropriate \emph{comparison principles} to handle the nonlocality. To this end, we introduce and apply a \emph{mediant inequality} in the construction of the threshold condition. This ensures global regularity of the solution to \eqref{eq:NARZ} for a non-trivial family of subcritical initial data. Given that solutions to the local ARZ model exhibit finite-time shock discontinuity for any generic initial data, our result highlights the regularization effect of the nonlocal look-ahead interaction \eqref{eq:lookahead} on the second-order ARZ model.

\subsection{Outline of the paper}
The remainder of this paper is organized as follows. 
In Section \ref{sec:results}, we state our main results and offer some interpretation and discussion, with detailed explanations provided in subsequent sections. 
In Section \ref{sec:lwp}, we establish a local well-posedness theory for \eqref{eq:NARZ}, and a regularity criterion to ensure global well-posedness. We demonstrate that this criterion is not met for the local ARZ model \eqref{eq:ARZ} with generic initial data. 
In Section \ref{sec:gwp}, we show that with the nonlocal look-ahead interaction, the system \eqref{eq:NARZ} is \emph{globally} well-posed for a non-trivial class of subcritical initial data. We give a explicit description of the subcritical region.

\subsection*{Notation}
We denote $L^p(\R)$ the Lebesgue spaces in $\R$, and $H^k(\R)$ the Sobolev space, endowed with the norm
\[\| g\| _{H^k}^2=\| g\| _{L^2}^2+\| \tfrac{d^k}{dx^k}g\| _{L^2}^2,\]
for any non-negative integer $k$. For general $k\geq0$, the space $H^k(\R)$ is defined via Fourier transform
\[\| g\| _{H^k} = \| (I-\Delta)^{k/2}g\| _{L^2} = \big\|\mathcal{F}^{-1}\Bigl[(1+| \xi| ^2)^{k/2}\mathcal{F}g \Bigr]\big\| _{L^2},\]
where $\mathcal{F}$ and $\mathcal{F}^{-1}$ are the forward and inverse Fourier transforms in $\R$, respectively.
We denote $\| \cdot\| _{\dot{H}^k(\R)}$ the homogeneous semi-norm with
\[\| g\| _{\dot{H}^k} = \|\Lambda^kg\|_{L^2}=\| (-\Delta)^{k/2}g\| _{L^2} = \big\| \mathcal{F}^{-1}\Bigl[| \xi| ^k\mathcal{F}g \Bigr]\big\| _{L^2},
\]
where the pseudo-differential operator $\Lambda=(-\Delta)^{1/2}$.

The notations $\lfloor k \rfloor$ and $\lceil k \rceil$ refer to the largest integer less than or equal to $k$, and the smallest integer greater than or equal to $k$, respectively.

We will repeatedly use the letter $C$ to refer to a constant $C>0$ whose value may change line by line. The constant might depend on parameters and initial conditions. We write $C(p)$ to represent that the constant depends on the parameter $p$. 

Finally, we denote $g'$ the derivative of $g$, if $g$ has a single variable, and $\dot g$ denotes the material derivative of $g=g(t,x)$ along the characteristic path $X(t,x)$ satisfying:
\begin{equation}\label{eq:path}
\pa_t X(t,x) = \Big(u+\rho U'(\rho) e^\mrhob\Big)(t,X(t,x)),\quad X(t=0,x) = x,
\end{equation}
so that
\[
\dot g(t,X(t,x)) = \frac{d}{dt}g(t,X(t,x)) = \Big(\partial_t g + \big(u+\rho U'(\rho) e^\mrhob\big)\partial_x g\Big)(t,X(t,x)).
\]

\section{Statement of Main Results}\label{sec:results}

The primary objective of this paper is to investigate the analytical properties of our proposed nonlocal ARZ model:
\begin{equation}\label{eq:main}
 \begin{cases}
  \pa_t\rho + \pa_x(\rho u)  = 0, \\    
  \pa_t\psi + u\pa_x\psi = 0,\\
  u = \psi+U(\rho)e^\mrhob, \quad \rhob(t,x) = \displaystyle\int_\R w(z) \rho(t,x+z)dz,
 \end{cases}
\end{equation}
with initial data
\begin{equation}\label{eq:init}
 \rho(0,x) = \rho_0(x), \quad u(0,x)=u_0(x),\quad   
   \psi(0,x) = \psi_0(x):=u_0(x)-v(\rho_0(x))e^{\mrhob(0,x)}.	
\end{equation}
Throughout the paper, we make following global assumptions to the initial data:
\begin{enumerate}[label=(A\theenumi),ref=A\theenumi]
 \item \label{A1} Finite total mass: $m:=\int_\R\rho_0(x) dx<\infty$.\smallskip
 \item \label{A2} Boundedness: $0\leq\rho_0(x)\leq 1$, $0\leq u_0(x)\leq 1$, for any $x\in\R$.\smallskip
 \item \label{A3} Smoothness: $\rho_0'\in H^{k-1}(\R)$, $u_0'\in H^{k-1}(\R)$, for some large $k$.\smallskip
 \item \label{A4} Monotonicity for $\psi_0$: $\psi_0'(x)\geq0$ for any $x\in\R$.
 \end{enumerate}
 
Several comments on these assumptions are as follows. 

\begin{itemize}
 \item From \eqref{A1} and \eqref{A2}, we know $\rho_0\in L^1\cap L^\infty(\R)$. Hence, $\rho_0\in L^2(\R)$. Together with the smoothness assumption \eqref{A3}, we have
 \[\rho_0\in H^k(\R).\]
 The smoothness assumption on $u_0$ in \eqref{A3} is weaker than $u_0\in H^k(\R)$. In particular, $u_0$ and $\psi_0$ do not necessarily vanish at infinity.
 \item The technical assumption \eqref{A4} plays a crucial role in our analysis. Without this assumption, it is not guaranteed that assumption \eqref{A2} will propagate in time. Specifically, it is known that the solution to the local ARZ model \eqref{eq:ARZ} may violate the maximum principle $\rho(t,x)\leq1$. This violation immediately makes the system ill-posed, as $U(\rho)$ is not physically defined for $\rho>1$. We will demonstrate that the loss of the maximum principle does not occur under assumption \eqref{A4}. For a more detailed discussion, refer to Remark \ref{rmk:psi}.
\end{itemize}

Our well-posedness theory concerns smooth solutions to \eqref{eq:main}, such that $(\rho(t),u(t))$ satisfy \eqref{A1}-\eqref{A4}.

We start with a local well-posedness result. We assume that the velocity function $U$ satisfies the general assumption \eqref{eq:Ucond}. Additionally, in order to achieve the propagation of \eqref{A3}, we require $U$ to be sufficiently smooth, satisfying
\begin{equation}\label{eq:Usmooth}
	U \in C^{\lceil k\rceil+1}([0,1]),\quad\text{and}\quad U^{-1}\in C^{\lceil k\rceil}([0,1]).
\end{equation}
Moreover, we enforce throughout that the weight function $w$ satisfies the look-ahead assumption \eqref{eq:w}.

\begin{theorem}[Local well-posedness]\label{thm:lwp}
  Let $k>\frac32$. Consider the Cauchy problem \eqref{eq:main}-\eqref{eq:init} where the initial data $(\rho_0,u_0)$ satisfy the assumptions \eqref{A1}-\eqref{A4}. Assume $U$ satisfies \eqref{eq:Ucond} and \eqref{eq:Usmooth}, and $w$ satisfies \eqref{eq:w}.
  Then 
  \begin{itemize}
  \item	There exists a time $T>0$ such that the system has a smooth solution $(\rho,u)$ in $[0,T]$, where $(\rho(t), u(t))$ satisfy \eqref{A1}-\eqref{A4} for any $t\in[0,T]$. In particular,
  \[\rho\in C([0,T], H^k(\R)),\quad \pa_xu\in C([0,T], H^{k-1}(\R)).\]
  \item The time $T$ can be extended as long as
\begin{equation}
\label{eq:condition}
\int_0^T \|\pa_x\rho(t,\cdot)\|_{L^\infty}\,dt <\infty .
\end{equation}
  \end{itemize}
\end{theorem}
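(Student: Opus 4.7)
The plan is a Picard iteration scheme tailored to the nonlocal coupling in \eqref{eq:main}, combined with hyperbolic $H^k$ energy estimates, followed by a separate argument propagating the structural bounds \eqref{A1}--\eqref{A4}, and finally a Beale-Kato-Majda type continuation argument keyed to $\|\pa_x\rho\|_{L^\infty}$.

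\textbf{Iteration and passage to the limit.} Given the current iterate $u^n$, define $\rho^{n+1}$ by the linear continuity equation $\pa_t\rho^{n+1}+\pa_x(\rho^{n+1}u^n)=0$ and $\psi^{n+1}$ by the linear transport equation $\pa_t\psi^{n+1}+u^n\pa_x\psi^{n+1}=0$, both with the given data $(\rho_0,\psi_0)$; then set $u^{n+1}=\psi^{n+1}+U(\rho^{n+1})e^{\mrhob^{n+1}}$. The first analytic goal is a uniform-in-$n$ bound on $E^n(t):=\|\rho^n(t)\|_{H^k}^2+\|\pa_x\psi^n(t)\|_{H^{k-1}}^2$ on some small $[0,T]$, obtained by applying $\Lambda^k$ and $\Lambda^{k-1}\pa_x$ to the two equations and using Kato-Ponce commutator estimates, with the Sobolev embedding $H^k\hookrightarrow W^{1,\infty}$ (valid since $k>\tfrac32$) closing the nonlinearity. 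The nonlocal quantity $\rhob$ is benign: by \eqref{A1} and \eqref{eq:w}, $\|\rhob\|_{L^\infty}\leq\|w\|_{L^\infty}m$, so $e^{\mrhob}$ and all compositions $U(\rho)e^{\mrhob}$ admit $H^k$ composition bounds via \eqref{eq:Usmooth}. Contraction of $(\rho^n,\psi^n)$ in the weaker norm $L^2\times L^2$ yields convergence to a solution of \eqref{eq:main}, and the same contraction delivers uniqueness.

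\textbf{Propagation of the constraints.} Integration in $x$ gives \eqref{A1}. For \eqref{A4}, differentiating the $\psi$-equation produces
\[\pa_t(\pa_x\psi)+u\pa_x(\pa_x\psi)+(\pa_x u)(\pa_x\psi)=0,\]
a linear equation homogeneous in $\pa_x\psi$, so the sign is preserved along $u$-characteristics. The subtle bound in \eqref{A2} is $\rho\leq 1$: at a spatial maximum with $\rho=1$ we have $U(\rho)=0$ and therefore $u=\psi$, so the density equation reduces in material form to $\dot\rho=-\rho\,\pa_x\psi\leq 0$ by \eqref{A4}, and the level set $\{\rho=1\}$ is never crossed. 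The bound $\rho\geq 0$ holds because the flux vanishes at $\rho=0$, and $0\leq u\leq 1$ follows from the transport of $\psi$ together with $0\leq U(\rho)e^{\mrhob}\leq 1$.

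\textbf{Continuation criterion.} The standard higher-order energy estimate yields
\[\frac{d}{dt}E(t)\leq C\bigl(1+\|\pa_x\rho(t)\|_{L^\infty}+\|\pa_x u(t)\|_{L^\infty}\bigr)E(t),\]
so it suffices to control $\|\pa_x u\|_{L^\infty}$ by $\|\pa_x\rho\|_{L^\infty}$ and time-integrable data. Differentiating $u=\psi+U(\rho)e^{\mrhob}$ gives
\[\pa_x u=\pa_x\psi+U'(\rho)e^{\mrhob}\pa_x\rho-U(\rho)e^{\mrhob}\pa_x\rhob,\]
and the last term is bounded by $\|\rho\|_{L^\infty}$ after integration by parts against the non-increasing weight $w$ (the nonlocal smoothing). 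The anticipated main obstacle is the $\pa_x\psi$ contribution: it evolves along $u$-characteristics via a linear ODE driven by $\pa_x u$ itself, producing a Gronwall loop. I would close this loop by the Sobolev bound $\|\pa_x\psi\|_{L^\infty}\leq C\|\pa_x\psi\|_{H^{k-1}}\leq C\sqrt{E(t)}$; coupling this with the energy inequality yields a closed Gronwall system whose only external driver is $\|\pa_x\rho\|_{L^\infty}$, so the hypothesis \eqref{eq:condition} forces $E$ to remain finite on $[0,T]$ and permits a standard short-time reapplication of the local existence result to continue the solution past $T$.
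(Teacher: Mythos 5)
Your overall architecture (iteration or a priori bounds, maximum principles, $H^k$ energy estimates with Kato-Ponce, a Beale-Kato-Majda continuation) is the right framework and matches the paper's in spirit. The paper works with the Riemann invariants $(u,\psi)$ rather than $(\rho,\pa_x\psi)$ and does not write out the iteration explicitly, but those are cosmetic differences. There are, however, two substantive gaps.

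The critical one is in the continuation criterion. You close the Gronwall loop by substituting the Sobolev bound $\|\pa_x\psi\|_{L^\infty}\leq C\sqrt{E(t)}$ (and similarly for $\|\pa_xu\|_{L^\infty}$, up to terms in $\|\pa_x\rho\|_{L^\infty}$) into $\tfrac{d}{dt}E \leq C\bigl(1+\|\pa_x\rho\|_{L^\infty}+\|\pa_xu\|_{L^\infty}\bigr)E$. This produces $\tfrac{d}{dt}E\leq C\bigl(1+\|\pa_x\rho\|_{L^\infty}\bigr)E + CE^{3/2}$, a \emph{superlinear} differential inequality. The superlinear term $E^{3/2}$ can drive blow-up of $E$ in finite time regardless of whether $\int_0^T\|\pa_x\rho\|_{L^\infty}\,dt<\infty$ (take $\|\pa_x\rho\|_{L^\infty}\equiv 0$ to see the issue), so the hypothesis \eqref{eq:condition} alone does not force $E$ to remain finite through time $T$. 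The missing idea is to exploit the sign condition $\pa_x\psi\geq0$ from \eqref{A4}: from $\pa_xu=\pa_x\psi+U'(\rho)e^{\mrhob}\pa_x\rho-U(\rho)\pa_x(e^{\mrhob})$ one obtains the \emph{one-sided} lower bound $\pa_xu\geq -C\|\pa_x\rho\|_{L^\infty}-w(0)$, and plugging this into the characteristic formula $\pa_x\psi(t,\widetilde X(t,x))=\psi_0'(x)\exp\bigl(-\int_0^t\pa_xu\,d\tau\bigr)$ gives the pointwise bound $\|\pa_x\psi(t,\cdot)\|_{L^\infty}\leq C\exp\bigl(C\int_0^t\|\pa_x\rho\|_{L^\infty}d\tau\bigr)$, and then a similar bound for $\|\pa_xu\|_{L^\infty}$. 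With these, the coefficient in the energy inequality is controlled by $\int_0^T\|\pa_x\rho\|_{L^\infty}$ alone, keeping the Gronwall \emph{linear} in $E$ and closing the argument.

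A secondary gap is the claim that $0\leq u\leq1$ ``follows from the transport of $\psi$ together with $0\leq U(\rho)e^{\mrhob}\leq1$.'' This does not follow: $\psi$ is constant along the $u$-characteristic, but $U(\rho)e^{\mrhob}$ at time $t$ and at time $0$ are evaluated at the two ends of that characteristic and can differ, so $u=\psi+U(\rho)e^{\mrhob}$ can exceed $1$ under your reasoning (e.g.\ start at a point with $\rho_0=1$, so $\psi_0=u_0\leq1$, and let $\rho$ decrease along the path). The correct argument is a breakthrough argument: compute $\dot u=-U(\rho)e^{\mrhob}\int_0^\infty w'(z)\rho(x+z)\bigl(u(x+z)-u(x)\bigr)\,dz$ along the $X$-characteristic, and at a first time and point where $u$ touches $1$, each factor in the integrand has a favorable sign (using $w'\leq0$), giving $\dot u\leq0$. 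Your treatment of \eqref{A1}, \eqref{A4}, and $0\leq\rho\leq1$ is fine.
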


\begin{remark}\label{rmk:U}
The regularity assumption \eqref{eq:Usmooth} is satisfied when the velocity function is linear \eqref{eq:Ulinear}. However, for nonlinear velocity functions \eqref{eq:Unonlinear} with  $J>1$, the assumption fails at $\rho=1$. To ensure compatibility with the family of fluxes in \eqref{eq:fluxJ} that we are concerned with, we can modify assumption \eqref{A2} as follows:
\begin{enumerate}[label=(A2$'$),ref=A2$'$]
 \item \label{A2p} Boundedness: $0\leq\rho_0(x)\leq \rhoM$, $0\leq u_0(x)\leq 1$, for any $x\in\R$,
\end{enumerate}
where $\rhoM<1$. We will show in Proposition \ref{prop:MP2} that \eqref{A2p} is satisfied for any $t\in[0,T]$.

When \eqref{A2p} holds, the assumption \eqref{eq:Usmooth} can be relaxed to:
\begin{equation}\label{eq:Usmoothp}
	U \in C^{\lceil k\rceil+1}([0,\rhoM]),\quad\text{and}\quad U^{-1}\in C^{\lceil k\rceil}([U(\rhoM),1]).
\end{equation}
Since $\rho$ stays away from 1, the nonlinear velocity functions in \eqref{eq:Unonlinear} satisfy \eqref{eq:Usmoothp}.
\end{remark}

The criterion \eqref{eq:condition} provides a sufficient condition that ensures regularity of the solution. Smooth solutions persist as long as 	$\pa_x\rho$ remain bounded. The only potential occurrence of blow-up is the finite-time shock formation.

However, achieving global well-posedness is challenging. In fact, for the local ARZ model \eqref{eq:ARZ} (or equivalently interpreted as \eqref{eq:main}-\eqref{eq:init} with $w\equiv0$), generic initial data result in finite-time blowup. We state the following theorem for ARZ model with linear velocity function $U(\rho)=1-\rho$. 
\begin{theorem} [Finite-time blowup for the local ARZ model with linear velocity]\label{thm:arzblowupLWR}
Consider the local ARZ model \eqref{eq:ARZ} with linear velocity function $U$ satisfying \eqref{eq:Ulinear}. Suppose the initial data $(\rho_0, u_0)$ satisfy \eqref{A1}-\eqref{A4}, and there exists $x_0\in\R$ such that $u_0'(x_0)<0$. Then, the solution must develop a shock in finite time. More precisely, there exist a finite time $T_*>0$ and $x\in\R$ such that 
\[
\lim_{t\to T_*-}\partial_x \rho(t,x) = \infty\quad\text{and}\quad
\lim_{t\to T_*-}\partial_x u(t,x) = -\infty.
\]
\end{theorem}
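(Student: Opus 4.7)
The plan is to reduce the local ARZ model with linear $U$ to its Zhang form \eqref{eq:Z} and derive a Riccati-type inequality for $\partial_x u$ along the genuinely nonlinear characteristic of \eqref{eq:path}. With $U(\rho)=1-\rho$ and $w\equiv 0$ in the local setting, the Zhang formulation reads
\[
\partial_t\rho+\partial_x(\rho u)=0, \qquad \partial_t u + (u-\rho)\partial_x u = 0,
\]
and the path \eqref{eq:path} specializes to $\dot X=u-\rho$, along which $\dot u = 0$.

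To locate the singularity, I would differentiate the $u$-equation in $x$. Writing $v:=\partial_x u$ and $q:=\partial_x\psi$, and using $\psi = u-U(\rho)=u+\rho-1$ to express $\partial_x\rho = q-v$, a direct computation yields the Riccati equation
\[
\dot v \;=\; -(\partial_x u - \partial_x\rho)\,v \;=\; qv - 2v^2
\]
along this characteristic.

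The essential sign information is $q\geq 0$ throughout the smooth existence interval. Since $\psi$ satisfies the transport equation $\partial_t\psi+u\partial_x\psi=0$, it is conserved along the characteristic of speed $u$; as long as the solution is smooth, that characteristic map $a\mapsto \widetilde X(t;a)$ is a diffeomorphism, so assumption \eqref{A4} propagates via $\partial_x\psi(t,\widetilde X(t;a))=\psi_0'(a)/\partial_a \widetilde X(t;a)\geq 0$. Substituting back, one obtains $\dot v \leq -2v^2$ with $v(0,x_0)=u_0'(x_0)<0$. Because $v=0$ is an equilibrium of the comparison ODE, $v$ remains strictly negative along the characteristic, and integration of $\dot\phi\leq -2$ for $\phi:=-1/v>0$ forces $v\to-\infty$ in a time $T_*\leq 1/(2|u_0'(x_0)|)$. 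Setting $x=X(T_*;x_0)$ then yields $\partial_x u(t,x)\to -\infty$ and, via $\partial_x\rho = q-v \geq -v$, also $\partial_x\rho(t,x)\to +\infty$ as $t\to T_*-$.

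The main technical subtlety is the interaction of the two characteristic families: the Riccati blow-up lives on the genuinely nonlinear characteristic of speed $u-\rho$, whereas the monotonicity $q\geq 0$ is transported along the linearly degenerate characteristic of speed $u$. Both arguments remain valid on the maximal smooth existence interval provided by Theorem \ref{thm:lwp}, and in particular on any $[0,T]$ with $T<T_*$ the Lipschitz control on $u$ that legitimizes the diffeomorphism argument holds automatically. Running the two arguments in tandem forces the smooth existence interval to terminate no later than $1/(2|u_0'(x_0)|)$, yielding the claimed finite-time shock formation.
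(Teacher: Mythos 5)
Your proof is correct and follows essentially the same route as the paper: a Riccati inequality for $v = \partial_x u$ along the genuinely nonlinear characteristic $\dot X = u - \rho$, closed by transporting $\partial_x\psi \geq 0$ along the contact characteristic of speed $u$, with the blow-up of $\partial_x\rho$ deduced from the Riemann-invariant relation $\partial_x\rho = \partial_x\psi - \partial_x u$. The paper establishes the general-$U$ result (Theorem \ref{thm:arzblowupGENERAL}) and recovers the linear case as $\beta=1$, obtaining $\dot v \leq -v^2$; your early specialization to $U(\rho)=1-\rho$ yields the same structure with the slightly sharper $\dot v \leq -2v^2$ and hence $T_* \leq 1/(2|u_0'(x_0)|)$.
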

Theorem \ref{thm:arzblowupLWR} shows that a finite-time blowup must happen as long as $u_0$ is not monotone increasing. 

We further explore a generalization of Theorem \ref{thm:arzblowupLWR} in the context of more general velocity functions, including \eqref{eq:Unonlinear}.
\begin{theorem} [Finite-time blowup for local ARZ model with general velocity]\label{thm:arzblowupGENERAL}
Consider the local ARZ model \eqref{eq:ARZ} with general velocity function $U$ satisfying \eqref{eq:Ucond}, and there exists $\beta\in(0,1]$ such that
\begin{equation}\label{eq:Ublowup}
    \frac{\rho U''(\rho)}{U'(\rho)}\geq-1, \quad \forall~\rho\in[0,\beta].
\end{equation}
Suppose the initial data $(\rho_0, u_0)$ satisfy \eqref{A1}-\eqref{A4}, and there exists $x_0\in\R$ such that $\rho_0(x_0)\leq\beta$ and $u_0'(x_0)<0$. Then, the solution must develop a shock in finite time. 
\end{theorem}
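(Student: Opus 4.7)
My plan is to run a phase-plane analysis along the slower characteristic $Y(t)$ defined by $\dot Y(t) = \lambda(t, Y(t))$, where $\lambda := u + \rho U'(\rho)$ and $Y(0) = x_0$. In the local ARZ model this coincides with the characteristic path $X$ from \eqref{eq:path} (since $\rhob \equiv 0$ here), and the Zhang form \eqref{eq:Z} shows that $u$ is transported along $Y$, so $u(t, Y(t)) = u_0(x_0)$. My goal is to show $\partial_x u(t, Y(t)) \to -\infty$ in finite time and then translate this into $\partial_x \rho \to +\infty$ via the identity $\partial_x u = \partial_x \psi + U'(\rho)\partial_x \rho$. Two preliminary monotonicity facts are needed. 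First, differentiating $\partial_t \psi + u \partial_x \psi = 0$ in $x$ gives $\tfrac{d}{dt}\partial_x \psi = -\partial_x u \cdot \partial_x \psi$ along $u$-characteristics, so the sign of $\partial_x \psi$ is preserved in time; assumption \eqref{A4} then yields $\partial_x \psi \geq 0$ globally. Second, computing $\tfrac{d\rho}{dt}$ along $Y$ and using this same identity,
\[
\frac{d\rho}{dt} = \partial_t \rho + \lambda \partial_x \rho = -\rho \partial_x u + \rho U'(\rho)\partial_x \rho = -\rho\,\partial_x \psi \leq 0,
\]
so $\rho(t, Y(t))$ is non-increasing and remains in $[0, \beta]$ throughout the lifespan.

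Differentiating the Zhang equation $\partial_t u + \lambda \partial_x u = 0$ in $x$ and restricting to $Y$ yields $\tfrac{d}{dt}\partial_x u = -\partial_x \lambda \cdot \partial_x u$. Expanding $\partial_x \lambda = \partial_x u + (U'(\rho)+\rho U''(\rho))\partial_x \rho$, substituting $\partial_x \rho = (\partial_x u - \partial_x \psi)/U'(\rho)$, and setting $\mu(\rho) := 1 + \rho U''(\rho)/U'(\rho)$, I arrive at the closed ODE
\[
\frac{d}{dt}\partial_x u = -\bigl(1+\mu(\rho)\bigr)(\partial_x u)^2 + \mu(\rho)\,\partial_x \psi \cdot \partial_x u
\]
along $Y$. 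Hypothesis \eqref{eq:Ublowup} is precisely $\mu(\rho) \geq 0$ on $[0, \beta]$, which by the preceding step holds along $Y$. Combined with $\partial_x \psi \geq 0$ and the initial sign $\partial_x u(0, x_0) = u_0'(x_0) < 0$, both terms on the right are non-positive, so $\partial_x u$ stays strictly negative and $\tfrac{d}{dt}\partial_x u \leq -(\partial_x u)^2$. A standard Riccati comparison then gives blowup at some finite time $T_* \leq 1/|u_0'(x_0)|$, with $\partial_x u(t, Y(t)) \to -\infty$ as $t \to T_*^-$.

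The blowup of $\partial_x \rho$ follows immediately from $\partial_x \rho = (\partial_x u - \partial_x \psi)/U'(\rho)$: since $U'(\rho)$ is continuous and strictly negative on $[0, \beta]$ (hence bounded away from zero) and $\partial_x \psi \geq 0$, the divergence $\partial_x u \to -\infty$ forces $\partial_x \rho(t, Y(t)) \to +\infty$ as $t \to T_*^-$, producing the shock. The main obstacle in this plan is securing the correct sign of the cross term $\mu(\rho)\,\partial_x \psi \cdot \partial_x u$ in the Riccati ODE: this is precisely where assumption \eqref{A4} is indispensable, since without $\partial_x \psi \geq 0$ this term could oppose the quadratic blowup and the mechanism would break down.
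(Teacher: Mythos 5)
Your proof is correct and follows essentially the same route as the paper: differentiating the Zhang form of the $u$-equation along the $u+\rho U'(\rho)$ characteristic, using \eqref{A4} to propagate $\partial_x\psi\geq 0$ and to get $\dot\rho\leq 0$ (hence $\rho\leq\beta$), substituting $\partial_x\rho=(\partial_x u-\partial_x\psi)/U'$ to close a Riccati inequality $\tfrac{d}{dt}\partial_x u\leq-(\partial_x u)^2$, and then transferring the blowup to $\partial_x\rho$. One small slip in the last step: the direction of boundedness you invoke for $U'$ is not the one you need --- to turn $\partial_x u\to-\infty$ into $\partial_x\rho\geq\partial_x u/U'\to+\infty$ you need $|U'|$ bounded \emph{above} (which holds by continuity on the compact interval $[0,\rho_0(x_0)]$), not $|U'|$ bounded away from zero.
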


\begin{remark}
The condition \eqref{eq:Ublowup} on $U$ in Theorem \ref{thm:arzblowupGENERAL} is satisfied by the family of velocity functions $U(\rho) = (1-\rho)^J$ in \eqref{eq:Unonlinear}, with $\beta=\frac1J$. If we take $J=1$, the theorem reduces to Theorem \ref{thm:arzblowupLWR}.
\end{remark}

In contrast to the blowup observed in the local ARZ model, we demonstrate that the nonlocal look-ahead slowdown interaction in \eqref{eq:lookahead} can help preventing blowup. This phenomenon has been investigated in \cite{lee2022sharp,hamori2023sharp} for the first-order models \eqref{eq:basicn}, with a specific choice of the uniform weight function:
\begin{equation}\label{eq:kernel}
w(z) = 1,\quad \forall~z\geq0.
\end{equation}

We extend this phenomenon to the associated second-order nonlocal model \eqref{eq:main}. In addition to \eqref{A4}, we need to impose the following technical assumption on $\psi_0$:
\begin{enumerate}[label=(A5),ref=A5]
 \item \label{A5} $F_0(x):=\displaystyle\frac{\psi_0'(x)}{\rho_0(x)}$ and $G_0(x):=\displaystyle\frac{F_0'(x)}{\rho_0(x)}$ are bounded, for any $x\in\R$.
\end{enumerate}
The assumption \eqref{A5} requires $\psi_0'(x)=0$ if $\rho_0(x)=0$, namely there is no growth of $\psi_0$ at vacuum. 

We present our main theorem as follows.
\begin{theorem}[Global well-posedness]\label{thm:main}
Let $k>\frac32$. Consider the Cauchy problem \eqref{eq:main}-\eqref{eq:init} where the initial data $(\rho_0,u_0)$ satisfy the assumptions \eqref{A1}-\eqref{A5}. Assume $U$ satisfies \eqref{eq:Ucond} and \eqref{eq:Usmooth} (or \eqref{eq:Usmoothp}, with \eqref{A2} replaced by \eqref{A2p}). Assume $w$ satisfies \eqref{eq:kernel}.
Then, there exists a threshold function $\eta$, defined uniquely through \eqref{eq:etadyn}, such that if the initial data is subcritical, satisfying
\begin{equation}\label{eq:subcritical}
\rho_0'(x)\leq\eta(\rho_0(x))\quad\text{and}\quad \rho_0(x)\leq \rho_c, \quad \forall~x\in \R,	
\end{equation}
then the system admits a global smooth solution, namely $(\rho(t),u(t))$ satisfies \eqref{A1}-\eqref{A5} for any $t>0$.
\end{theorem}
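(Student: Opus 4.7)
The plan is to reduce the claim to the continuation criterion \eqref{eq:condition} of Theorem \ref{thm:lwp}: it suffices to produce a time-uniform $L^\infty$ bound on $\pa_x\rho$ on any finite interval. This bound is obtained by a phase-plane analysis of $(\rho,\pa_x\rho)$ along the $\rho$-characteristic $X$ defined in \eqref{eq:path}, treating the information carried by $\psi$ and by $\rhob$ as bounded sources.

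The first step is to exploit the transport structure of the $\psi$-equation. A direct computation gives $(\pa_t+u\pa_x)\rho=-\rho u_x$ and $(\pa_t+u\pa_x)\psi_x=-u_x\psi_x$, which combine to
\[ (\pa_t+u\pa_x)F=0, \qquad (\pa_t+u\pa_x)G=0, \]
for $F=\psi_x/\rho$ and $G=F_x/\rho$ as in \eqref{A5}. Hence $\|F(t,\cdot)\|_{L^\infty}=\|F_0\|_{L^\infty}$ and $\|G(t,\cdot)\|_{L^\infty}=\|G_0\|_{L^\infty}$ are conserved, and the sign condition \eqref{A4}, read through \eqref{A5} as $F_0\geq 0$, is propagated because $F$ is transported.

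Next I would derive the characteristic ODEs. Setting $d=\pa_x\rho$, using $u=\psi+U(\rho)e^\mrhob$ together with $\pa_x\rhob=-\rho$ (since $w\equiv 1$), and substituting $\psi_x=\rho F$, $\psi_{xx}=dF+\rho^2 G$ into the continuity equation and its $x$-derivative, a computation in which the $\rho_{xx}$-terms cancel thanks to the choice of speed in \eqref{eq:path} produces
\begin{align*}
\dot\rho &= -\rho^2\bigl(F+U(\rho)e^\mrhob\bigr), \\
\dot d &= -e^\mrhob f''(\rho)\,d^2 \;+\; L(\rho,e^\mrhob,F)\,d \;+\; S(\rho,e^\mrhob,F,G),
\end{align*}
with $f(\rho)=\rho U(\rho)$, a bounded coefficient $L$, and a source $S$ controlled by the previous step together with \eqref{A1}--\eqref{A2} (which give $\rhob\in[0,m]$ and hence $e^\mrhob\in[e^{-m},1]$). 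Because $F\geq 0$ and $U(\rho)\geq 0$, the first equation gives $\dot\rho\leq 0$: $\rho$ is non-increasing along the $\rho$-characteristic, so the constraint $\rho\leq\rhoc$ is automatically preserved. On $[0,\rhoc)$ one has $f''<0$, so the coefficient of $d^2$ in the second equation is strictly positive; this Riccati mechanism is precisely the source of shock formation in Theorems \ref{thm:arzblowupLWR}--\ref{thm:arzblowupGENERAL}, and it is what the threshold must overcome.

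The core of the argument is to build a threshold curve $\eta$ on $[0,\rhoc]$ so that $\Omega=\{(\rho,d):0\leq\rho\leq\rhoc,\ d\leq\eta(\rho)\}$ is invariant under the characteristic flow, by imposing the slope condition $\eta'(\rho)=\dot d/\dot\rho$ on its boundary. The obstruction is that this ratio depends explicitly on $e^\mrhob$ and is therefore not a function of $(\rho,d)$ alone. Here I would apply the \emph{mediant inequality}: if $a/b\leq c/d$ with $b,d>0$ then $(a+c)/(b+d)$ lies between $a/b$ and $c/d$. By separating the contributions proportional to $e^\mrhob$ from those that are not, and comparing the mixed ratio against the two extreme admissible values $e^\mrhob=1$ and $e^\mrhob=e^{-m}$, the nonlocal factor drops out of the sharp upper bound and one obtains an autonomous ODE in $\rho$ for $\eta$, which is the defining equation \eqref{eq:etadyn}. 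The subcritical hypothesis \eqref{eq:subcritical} places the initial data in $\Omega$, and the mediant-based comparison keeps it there along every characteristic, so $d\leq\sup_{[0,\rhoc]}\eta<\infty$ uniformly in time. This verifies \eqref{eq:condition} and upgrades the local solution to a global one; propagation of \eqref{A1}--\eqref{A5} is collected along the way from the continuity equation, the invariance of $\Omega$, and the transport of $F,G$. The main obstacle is precisely this last step: arranging $\dot d/\dot\rho$ algebraically so that the mediant inequality can decouple the nonlocal factor with a sharp constant is where the special form of the Arrhenius look-ahead slowdown \eqref{eq:lookahead} is used in an essential way.
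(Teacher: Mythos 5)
Your roadmap is the right one, and it is essentially the paper's: transport of $F=\psi_x/\rho$ and $G=F_x/\rho$, characteristic ODEs for $(\rho,d)$ with $d=\pa_x\rho$ using $\pa_x\rhob=-\rho$, an autonomous threshold ODE obtained via the mediant inequality, a comparison principle keeping trajectories below $\eta$, and then Theorem~\ref{thm:lwp} to close. But the step you yourself flag as the main obstacle --- how to arrange $\dot d/\dot\rho$ so the mediant inequality decouples the nonlocal factor --- is where your proposal goes wrong, and it is not a small slip. You propose to ``separate the contributions proportional to $e^\mrhob$ from those that are not, and compare the mixed ratio against the two extreme admissible values $e^\mrhob=1$ and $e^\mrhob=e^{-m}$.'' Carry that split out: the $e^\mrhob$-proportional piece is $A_0/B_0=\big(f''d^2+(2\rho f'+f)d+\rho^2 f\big)/(\rho f)$, with $e^\mrhob$ cancelling exactly; the remaining piece is $A_1/B_1=\big(3\rho d F+\rho^3 G\big)/(\rho^2 F)=3d/\rho+\rho G/F$. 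The ratio $G/F$ has no a priori bound ($F$ may vanish where $G$ does not), so this piece is not controllable, and there is also no ``extreme value of $e^\mrhob$'' left to compare against, since it has dropped out of $A_0/B_0$ identically. The paper's decomposition instead groups by presence of the factor $F$: it takes $A_1=3\rho\,d\,F$, $B_1=\rho^2 F$ (so the whole piece collapses to $3d/\rho$, with $B_1>0$ exactly because \eqref{A4} gives $F\geq 0$), and puts the $\rho^3 G$ source in $A_0=e^\mrhob\big(f''d^2+(2\rho f'+f)d+\rho^2 f\big)+\rho^3 G$ over $B_0=e^\mrhob\rho f$, so the only surviving nonlocal dependence is $G/e^\mrhob$, which is bounded by $\|G_0\|_{L^\infty}e^{mw(0)}$ from \eqref{eq:nonlocalbound}. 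That is the ``sharp constant'' $C_\eta$ in \eqref{eq:etaode}; your split cannot produce it.

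Two smaller gaps. First, in \eqref{eq:etadyn} the slope relation at $\rho=0$ is genuinely singular; selecting the unique branch with $\eta'(0)=\sigma'(0)>0$ (Propositions~\ref{prop:FGequal}--\ref{prop:eta}) is required before one has a well-defined $\eta$, and your sketch does not address it. Second, $\eta$ only gives an \emph{upper} bound on $d$ along characteristics; to verify \eqref{eq:condition} one must also rule out $d\to-\infty$, which the paper handles separately by a Riccati estimate on the $d$-equation for $\rho<\rhoc$ (see the proof of Theorem~\ref{thm:subcritical}). Your invariant-region description tacitly assumes a two-sided bound that the single curve $\eta$ does not by itself deliver.
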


\begin{remark}
 Figure \ref{fig:eta} illustrates the threshold function $\eta$. In particular, $\eta(0)=0$ and $\eta'(0)>0$. Therefore, the intersection of the subcritical region \eqref{eq:subcritical} and $\rho_0'(x)>0$ is non-empty. This means that there exists a class of non-trivial subcritical initial conditions where $\rho_0$ is not monotone decreasing.
 
 When $\psi_0\equiv0$, our system reduces to the first-order nonlocal model \eqref{eq:basicn}. The threshold function $\eta$ then coincides with $\sigma$, which is the threshold function for \eqref{eq:basicn} extensively studied in \cite{lee2022sharp,hamori2023sharp}.  It has been shown that $\sigma$ is bounded in $[0,\rhoc]$. 
While $\eta$ is bounded from above by $\sigma$, it is possible that $\eta$ becomes $-\infty$ at $\rho_*<\rhoc$. To make sense of \eqref{eq:subcritical}, we adopt the convention $\eta(\rho)=-\infty$ for $\rho\in[\rho_*,\rhoc]$, and the inequality $\rho_0'(x)\leq-\infty$ is always false. In any case, the subcritical region consists of points $(\rho_0(x),\rho'_0(x))$ in the phase plane lying beneath the graph of the threshold function $\eta$.
 
 We focus our discussion in the range where $\rho\leq\rhoc$, namely the flux $f$ is concave. When we allow the initial density to exceed $\rhoc$, there may arise another form of finite-time blowup where $\pa_x\rho(t,x)\to-\infty$. A comprehensive study on this type of blowup has been done in \cite{hamori2023sharp} for the first-order model \eqref{eq:basicn}, introducing another threshold function $\gamma$. Extending this analysis to the second-order model remains a subject for future investigation.
\end{remark}

\begin{figure}[htb]
\includegraphics[width=0.45\textwidth]{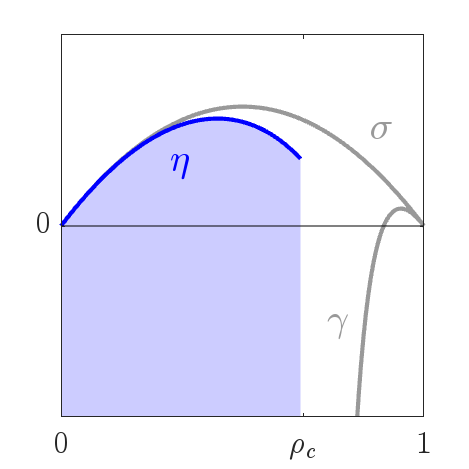}
\includegraphics[width=0.45\textwidth]{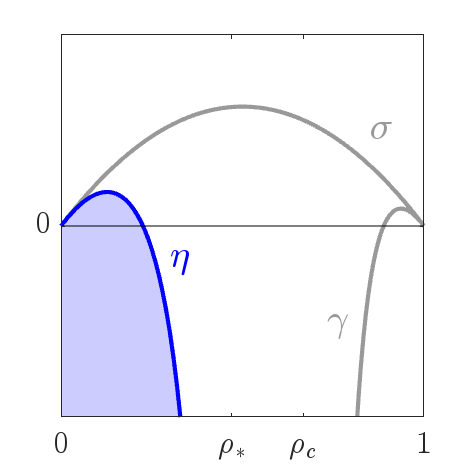}
\caption{Illustrations to the threshold function $\eta$ and the subcritical region in \eqref{eq:subcritical}. The flux is chosen as $f(\rho)=\rho(1-\rho)^2$. Depending on different choices of $\psi_0$, $\eta$ can either be bounded in $[0,\rhoc]$ (left), or it may blow up to $-\infty$ at $\rho_*<\rhoc$ (right).}\label{fig:eta}	
\end{figure}

The proof of Theorem \ref{thm:main} represents the most significant analytical challenge in this work. Beyond the nonlocality inherent in \eqref{eq:lookahead}, the dynamics of $\psi$ propagate with a different characteristic speed compared to $u$ (and $\rho$). The evolution of $\psi$ (as well as $\pa_x\psi$, $\pa_{xx}^2\psi$, etc.) along the characteristic path $X(t,x)$ relies on nonlocal information. To effectively handle the nonlocality, we introduce a comparison principle to compare the trajectories of $(\rho,\pa_x\rho)$ in the phase plane against the threshold function $\eta$, which is constructed solely from local information. We leverage a mediant inequality (Lemma \ref{lem:mediant}) to establish this comparison principle, a novel approach capitalizing on the unique structure of the phase plane dynamics.

\section{Local well-posedness and regularity criteria}\label{sec:lwp}

In this section, we develop a local well-posedness theory for the nonlocal ARZ model \eqref{eq:main}, proving Theorem \ref{thm:lwp}.

Suppose $(\rho,u)$ is a classical solution of \eqref{eq:main} with initial condition $(\rho_0,u_0)$ satisfying \eqref{A1}-\eqref{A4}. Our goal is to show that the assumptions \eqref{A1}-\eqref{A4} hold for $(\rho(t),u(t))$ at any time $t\in[0,T]$ as long as solution exists.

\subsection{A priori bounds}
We begin by establishing some a priori bounds that justify the propagation of assumptions \eqref{A1}, \eqref{A2} (or \eqref{A2p}), and \eqref{A4}.

By integrating the $\rho$-equation in \eqref{eq:main} with respect to $x$, we obtain
\[
\frac{d}{dt}\int_\R\rho(t,x)dx = -\int_\R \pa_x(\rho u) dx = 0.
\]
Therefore, the total mass is \emph{conserved} in time, namely:
\[
\int_\R \rho(t,x)dx = \int_\R \rho_0(x)dx = m,\quad \forall~t\geq0.
\]
Hence, the assumption \eqref{A1} holds in all time.

Next, we state the following maximum principles on $\rho$ and $u$, as well as the monotonicity of $\psi$, which contribute to the propagation of assumption \eqref{A2} and \eqref{A4} over all time. 

\begin{proposition}[Maximum principles]\label{prop:MP}
Let $(\rho,u)$ be a classical solution of \eqref{eq:NARZ} with initial data $(\rho_0,u_0)$ satisfying \eqref{A1}-\eqref{A4}.
Then, for any $t\in[0,T]$ and $x\in\R$, we have
\[
0\leq\rho(t,x)\leq 1, \quad 0\leq u(t,x)\leq 1,\quad \text{and}\quad \pa_x\psi(t,x)\geq0.
\]
\end{proposition}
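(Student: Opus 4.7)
The plan is to prove the three assertions in dependency order — first $\pa_x\psi\ge 0$, then $\rho\in[0,1]$, and finally $u\in[0,1]$ — by exploiting two characteristic families: the $u$-characteristic $Y$ with $\dot Y=u(t,Y)$, adapted to the transport of $\psi$, and the $X$-characteristic from \eqref{eq:path} with speed $u+\rho U'(\rho)e^\mrhob$, adapted to the $\rho$-equation. Assumption \eqref{A4} will drive the $\rho$-bound, and the specific Arrhenius form $U(\rho)e^\mrhob$ will be used through the structural fact that $U(1)=0$.

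For the monotonicity of $\psi$, I would differentiate $\pa_t\psi+u\pa_x\psi=0$ in $x$ to obtain the linear equation $\pa_t(\pa_x\psi)+u\pa_x(\pa_x\psi)=-(\pa_x u)(\pa_x\psi)$. Along $Y$ this is an ODE in $\pa_x\psi$, so $\pa_x\psi(t,Y(t))=\psi_0'(x_0)\exp(-\int_0^t\pa_x u\,ds)$ preserves the sign of the initial data; combined with \eqref{A4} and the surjectivity of the $Y$-map onto $[0,T]\times\R$ (from the smoothness of $u$), this gives $\pa_x\psi\ge 0$ pointwise.

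For $\rho\in[0,1]$, I would differentiate $\rho$ along $X$. Starting from $\dot\rho|_X=-\rho\pa_x u+\rho U'(\rho)e^\mrhob\pa_x\rho$ and substituting $\pa_x u=\pa_x\psi+U'(\rho)e^\mrhob\pa_x\rho-U(\rho)e^\mrhob\pa_x\rhob$, the $\rho U'(\rho)e^\mrhob\pa_x\rho$ contributions cancel cleanly and yield
\[ \dot\rho|_X \;=\; -\rho\,\pa_x\psi + f(\rho)\,e^\mrhob\,\pa_x\rhob,\qquad f(\rho):=\rho U(\rho). \]
This cancellation is exactly why $X$ was chosen. The decisive fact is that \eqref{eq:Ucond} forces $f(0)=f(1)=0$, so the right-hand side collapses at the endpoints of $[0,1]$: it vanishes identically at $\rho=0$ (making $\rho=0$ an invariant equilibrium of the ODE along $X$, so $\rho\ge 0$ is propagated), and at $\rho=1$ it reduces to $-\pa_x\psi\le 0$ by the first step. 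A standard first-touching argument — assume $\sup_x\rho(t,x)$ first reaches $1$ at some $(t_*,x_*)$, then trace back along the $X$-characteristic through $(t_*,x_*)$ to contradict $\dot\rho|_X\le 0$ there — closes the invariance of $[0,1]$.

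For $u\in[0,1]$, I would use $u=\psi+U(\rho)e^\mrhob$ and compute $\dot u$ along $X$. Using $\dot\psi|_Y=0$ and $\dot\rho|_Y=-\rho\pa_x u$, an analogous cancellation produces
\[ \dot u|_X \;=\; -U(\rho)\,e^\mrhob\,\big(\pa_t\rhob + u\,\pa_x\rhob\big). \]
For the uniform weight $w\equiv 1$ relevant to Theorem \ref{thm:main}, integration by parts using $\rho\to 0$ at infinity (guaranteed by \eqref{A1}) yields $\pa_x\rhob=-\rho$ and $\pa_t\rhob=\rho u$, so the parenthesized factor vanishes identically, $u$ is conserved along $X$, and $u(t,X(t))=u_0(x_0)\in[0,1]$ is immediate. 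For general $w$ satisfying \eqref{eq:w} the right-hand side is no longer zero but is a double integral in $\rho$ and $u$ controlled by the already-established $L^\infty$ bounds on $\rho$, so the $u$-bound propagates by a Gronwall/continuation argument. I expect the conceptual heart of the proof to be the cancellation in the $\rho$-step together with the vanishing $f(1)=U(1)=0$, while the $u$-bound for general weight $w$ will be the main technical wrinkle.
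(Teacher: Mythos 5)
Your treatment of $\pa_x\psi\ge 0$ and of $\rho\in[0,1]$ is essentially the paper's proof: the same exponential formula along the $u$-characteristic for $\pa_x\psi$ (and, with minor variation, for $\rho\ge0$), and the same identity $\dot\rho|_X=-\rho\,\pa_x\psi+\rho U(\rho)e^\mrhob\pa_x\rhob$ with $U(1)=0$ and a first-touching argument for $\rho\le 1$. Your observation that for $w\equiv1$ one has $\pa_x\rhob=-\rho$, $\pa_t\rhob=\rho u$, hence $\dot u|_X=0$ and $u$ is exactly conserved along $X$, is correct and in fact sharper than needed in that special case.

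The genuine gap is the bound $0\le u\le 1$ for general $w$ satisfying \eqref{eq:w}, which is the case the proposition actually covers. Your proposal to close it by ``a Gronwall/continuation argument'' using $L^\infty$ bounds on $\rho$ cannot work: bounding the magnitude of the right-hand side of $\dot u|_X=-U(\rho)e^\mrhob(\pa_t\rhob+u\,\pa_x\rhob)$ only yields growth estimates of the type $\|u\|_{L^\infty}\le e^{Ct}\|u_0\|_{L^\infty}$, not invariance of the interval $[0,1]$, and nothing prevents $u$ from crossing the value $1$ under such an estimate. What is needed — and what the paper does — is a sign argument at a first-breakthrough point: integrate by parts in $z$ (the boundary term at $z=0$ cancels) to rewrite
\[
\dot u \;=\; -U(\rho)e^\mrhob\int_0^\infty w'(z)\,\rho(x+z)\,\big(u(x+z)-u(x)\big)\,dz,
\]
and at a first point $(t_*,x_*)$ where $u$ reaches $1$ use $U(\rho)e^\mrhob\ge0$, $w'(z)\le0$, $\rho\ge0$, and $u(x_*+z)-u(x_*)\le0$ to conclude $\dot u(t_*,x_*)\le 0$, a contradiction; the lower bound $u\ge 0$ (which your write-up does not address for general $w$) follows by the mirror-image argument at a first touching of $0$. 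So your plan identifies the correct key identity for $\dot u$, but the mechanism you propose for propagating the $u$-bounds in the general-weight case is not the right one and must be replaced by this extremum sign argument.
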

\begin{proof}
We begin with the monotonicity of $\psi(t,\cdot)$. Differentiating the $\psi$-equation in \eqref{eq:main} with respect to $x$, we get
\[(\pa_t+u\pa_x)\pa_x\psi=-\pa_xu\,\pa_x\psi.\]
Integrating along the characteristic path $\widetilde{X}(t,x)$ where
\[ \pa_t\widetilde{X}(t,x)=u(t,\widetilde{X}(t,x)),\quad \widetilde{X}(0,x)=x,\]
we obtain that for any $x\in\R$,
\begin{equation}\label{eq:psix}
\pa_x\psi(t,\widetilde{X}(t,x))=\psi_0'(x)\exp\left(-\int_0^t \pa_xu(\tau,\widetilde{X}\big(\tau,x)\big)\,d\tau\right)\geq0,
\end{equation}
where we have used \eqref{A4}.
Since $(\rho,u)$ is a classical solution, $\widetilde{X}(t,\cdot): \R\to\R$ is a bijection. Therefore, $\pa_x\psi(t,x)\geq0$ for any $t\in[0,T]$ and $x\in\R$.

The same argument can be used to obtain $\rho(t,x)\geq0$ since $\rho$ and $\pa_x\psi$ satisfy the same equation, and $\rho_0(x)\geq0$ due to \eqref{A2}.

Next, we obtain the upper bound on $\rho$. Let us rewrite the $\rho$-equation in \eqref{eq:main} as the following dynamics along the characteristic path $X(t,x)$ defined in \eqref{eq:path}. 
\begin{equation}\label{eq:rhodynamics}
  \dot \rho = \pa_t\rho + \big(u+\rho U'(\rho) e^\mrhob\big)\pa_x\rho = -\rho\pa_x\psi + \rho U(\rho) e^\mrhob \pa_x\rhob.
\end{equation}
Observe that $\pa_x\psi\geq0$ and $U(1)=0$. Hence $\dot\rho\leq0$ when $\rho=1$. A standard comparison principle implies $\rho(t,X(t,x))\leq1$ as long as $\rho_0(x)\leq1$. 

Finally, we turn our attention to $u$. Compute the dynamics of $u$:
\begin{align*}
\pa_tu &= \pa_t\psi+\pa_t(U(\rho))e^\mrhob+U(\rho)\pa_t(e^\mrhob)\\
&=-u\big(\pa_x u -\pa_x(U(\rho)e^\mrhob)\big)-U'(\rho)\pa_x(\rho u)e^\mrhob+U(\rho)\pa_t(e^\mrhob)\\
&=-(u+\rho U'(\rho)e^\mrhob)\pa_xu+U(\rho)\big(\pa_t(e^\mrhob)+u\pa_x((e^\mrhob)\big).
\end{align*}
Then, the dynamics of $u$ along the characteristic path $X(t,x)$ reads:
\begin{align} 
\dot u &= \pa_tu+(u + \rho U'(\rho)e^\mrhob ) \pa_x u  = -U(\rho)e^\mrhob\big(\pa_t\rhob + u\pa_x\rhob\big)\nonumber\\
&= -U(\rho)e^\mrhob \int_0^\infty w(z)\Big( \pa_t \rho(x+z) + u(x)\pa_x\rho(x+z)\Big) dz\nonumber\\
&= -U(\rho)e^\mrhob \int_0^\infty w(z)\Big( -\pa_z (\rho(x+z)u(x+z)) + u(x)\pa_z\rho(x+z)\Big) dz\nonumber\\
&= -U(\rho)e^\mrhob \int_0^\infty w'(z) \rho(x+z)(u(x+z)- u(x)) dz.\label{eq:udynamics}
\end{align}
Here, $w'$ can be realized as weak derivative, if $w$ is not smooth. Note that the boundary term at $z=0$ in the integration by parts vanishes as $-w(0)\rho(x)u(x)+w(0)\rho(x)u(x)=0$.

To show $u(t,x)\leq1$, we argue by contradiction. Suppose the value of $u$ can exceed $1$, then there exist $t_*$ and $x_*$ such that the first breakthrough happens. Namely, 
\[u(t_*,x)\leq 1,\quad\forall~x\in\R,\quad\text{and}\quad u(t_*,x_*)=1.\]
From \eqref{eq:udynamics} and applying $U(\rho)e^\mrhob\geq0$, $w'(z)\leq0$, $\rho(x_*+z)\geq0$, and $u(x_*+z)-u(x_*)\leq0$, we obtain $\dot u(t_*,x_*)\leq0$. Hence, the breakthrough cannot happen. This leads to a contradiction.

The minimum principle $u(t,x)\geq0$ can be proved with a similar argument.
\end{proof}

\begin{remark}\label{rmk:psi}
 The propagation of the monotonicity assumption on $\psi$ \eqref{A4} plays a crucial role in the proof of the maximum principle for $\rho$. Without this assumption, we cannot deduce $\dot\rho\leq0$ from \eqref{eq:rhodynamics}, and the maximum principle does not follow. In fact, one can construct initial data satisfying \eqref{A1}-\eqref{A3} that lead to unrealistic solutions where $\rho(t,x)>1$ at some $(t,x)$.
 
An alternative method to circumvent the loss of the maximum principle, proposed in \cite{berthelin2008model}, considers a distinct type of pressure in the Aw-Rascle model \eqref{eq:AR} such that $p(\rho)$ becomes singular at $\rho=1$. However, this adjustment breaks the relation \eqref{eq:pUrelation}, causing the system to fall outside our framework. Instead, we introduce assumption \eqref{A4} to uphold the maximum principle.
\end{remark}

The following maximum principle on $\rho$ ensures the propagation of the assumption \eqref{A2p}, which is necessary if $U$ is not smooth at $\rho=1$, as discussed in Remark \ref{rmk:U}.

\begin{proposition}\label{prop:MP2}
Under the assumptions of Proposition \ref{prop:MP}, if we additionally assume the initial data $\rho_0$ satisfies \eqref{A2p}, then for any $t\in[0,T]$ and $x\in\R$, we have
\[
0\leq\rho(t,x)\leq \rhoM.
\]
\end{proposition}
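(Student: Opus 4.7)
The lower bound $\rho(t,x) \geq 0$ is already part of Proposition \ref{prop:MP}, so the plan is to prove the sharpened upper bound $\rho(t,x) \leq \rhoM$. I would adapt the first-breakthrough contradiction that Proposition \ref{prop:MP} uses to establish $u \leq 1$; the new analytical ingredient is an integration by parts that pins down the sign of $\pa_x \rhob$ at a global maximizer of $\rho$.

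Suppose for contradiction that the bound fails. Since $\rho(t,\cdot) \in H^k(\R) \cap L^1(\R)$ with $k > \tfrac{3}{2}$, we have $\rho(t,\cdot) \in C_0(\R)$, so its spatial maximum is attained, and continuity in $t$ furnishes a first time $t_* \in (0,T]$ and a point $x_* \in \R$ with
\[
\rho(t_*,x) \leq \rhoM \text{ for all } x \in \R, \quad \text{and} \quad \rho(t_*,x_*) = \rhoM.
\]
Since $x_*$ is a global maximizer of $\rho(t_*,\cdot)$, one has $\pa_x \rho(t_*,x_*) = 0$, so the material derivative along the characteristic $X$ from \eqref{eq:path} through $(t_*,x_*)$ coincides with $\pa_t \rho$ at this point. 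The Lagrangian equation \eqref{eq:rhodynamics} then reads
\[
\dot \rho(t_*,x_*) = -\rho\,\pa_x \psi\,\big|_{(t_*,x_*)} + \rho\,U(\rho)\,e^{\mrhob}\,\pa_x \rhob\,\big|_{(t_*,x_*)}.
\]
The first summand is $\leq 0$ because $\pa_x \psi \geq 0$ (Proposition \ref{prop:MP}) and $\rho \geq 0$, so the plan reduces to showing $\pa_x \rhob(t_*,x_*) \leq 0$.

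For this key step, I would integrate by parts in $z$ (justified by $\rho(t_*,\cdot) \to 0$ at infinity, with $w'$ read as a non-positive distributional derivative when $w$ is merely monotone) to get
\[
\pa_x \rhob(t_*,x_*) = -w(0)\,\rho(t_*,x_*) - \int_0^\infty w'(z)\,\rho(t_*,x_* + z)\,dz.
\]
Using $-w'(z) \geq 0$ (monotonicity of $w$ in \eqref{eq:w}) and the global-maximum inequality $\rho(t_*,x_* + z) \leq \rho(t_*,x_*) = \rhoM$, the integral is bounded above by $\rhoM\,(w(0) - w(\infty))$, whence $\pa_x \rhob(t_*,x_*) \leq -\rhoM\,w(\infty) \leq 0$. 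Combining with the first summand yields $\dot \rho(t_*,x_*) \leq 0$, which is incompatible with $t_*$ being a first-breakthrough time, and the contradiction proves $\rho \leq \rhoM$.

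The principal obstacle is exactly the sign control of $\pa_x \rhob$ at the maximizer: along a generic characteristic the nonlocal term could push $\rho$ upward, so the pointwise Lagrangian maximum principle that sufficed for $\rho \leq 1$ (where $U(1) = 0$ closed the estimate) is no longer available. The integration-by-parts calculation overcomes this by converting the sign question into a pointwise consequence of the Eulerian global-maximum inequality $\rho(t_*,x_* + z) \leq \rho(t_*,x_*)$ and the monotonicity of $w$.
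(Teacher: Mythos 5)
Your argument is correct and is essentially the paper's own proof: a first-breakthrough contradiction in which one integrates by parts in $z$ to write $\pa_x\rhob(t_*,x_*)=-w(0)\rho(t_*,x_*)-\int_0^\infty w'(z)\rho(t_*,x_*+z)\,dz$, then uses $-w'\ge 0$ and the global-maximum bound $\rho(t_*,x_*+z)\le\rhoM$ to get $\pa_x\rhob(t_*,x_*)\le 0$, hence $\dot\rho(t_*,x_*)\le 0$ via \eqref{eq:rhodynamics} and the sign of $\pa_x\psi$. Your retention of the $w(\infty)$ term (giving $\le -\rhoM w(\infty)\le 0$) is a marginally sharper form of the same estimate and does not change the argument.
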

\begin{proof}
 The proof differs slightly from the proof in Proposition \ref{prop:MP} since $U(\rhoM)\neq0$, so the last term in \eqref{eq:rhodynamics} does not vanish when $\rho=\rhoM$. Instead, we will show that the term is negative. 
 
 Suppose the value of $\rho$ can exceed $\rhoM$, then there exist $t_*$ and $x_*$ such chat the first breakthrough happens, namely
 \[\rho(t_*,x)\leq \rhoM,\quad \forall~x\in\R,\quad\text{and}\quad \rho(t_*,x_*)=\rhoM.\]
 Compute
 \begin{align}
    \pa_x\rhob(t_*,x_*) &= \int_0^\infty w(z)\pa_z\rho(t_*,x_*+z)dz =-w(0)\rho(x_*) - \int_0^\infty w'(z)\rho(t_*,x_*+z)dz\nonumber\\
    &\leq -w(0)\rhoM+\rhoM\int_0^\infty\big(-w'(z)\big)dz\leq -w(0)\rhoM + w(0)\rhoM = 0.\label{eq:rhobxdyn}
\end{align}
Then we apply \eqref{eq:rhodynamics} and obtain $\dot\rho(t_*,x_*)\leq0$. Hence the breakthrough cannot happen. This leads to a contradiction.
\end{proof}

The following bounds on the nonlocal interaction term will be used in our analysis.
From conservation of mass, maximum principle on $\rho$ and monotonicity of $w$, we obtain
\begin{equation} \label{eq:rhobbound}
 0\leq \rhob(t,x)=\int_0^\infty w(z)\rho(x+z)dz\leq w(0)\int_0^\infty\rho(x+z)dz\leq m w(0),
\end{equation}
which then implies
\begin{equation}
\label{eq:nonlocalbound}
e^{- mw(0)}\leq e^{-\rhob}\leq 1.
\end{equation}
Furthermore, a similar argument as \eqref{eq:rhobxdyn} yields
\begin{align*}
 |\pa_x\rhob(t,x)| &= \left|-w(0)\rho(x) + \int_0^\infty (-w'(z))\rho(t,x+z)dz\right|\\
 &\leq \max\left\{w(0)\rho(x),\int_0^\infty (-w'(z))\rho(t,x+z)dz \right\}
 \leq w(0).
\end{align*}
Consequently, we have
\begin{equation}\label{eq:nonlocalpax}
\|\pa_x(e^\mrhob)\|_{L^\infty} = \|e^\mrhob\|_{L^\infty}\|\pa_x\rhob\|_{L^\infty} \leq w(0).	
\end{equation}

\subsection{Energy estimates}
In this part, we aim to show the propagation of the regularity assumption \eqref{A3}. We will study the time evolution of $\|\pa_x\rho\|_{H^{k-1}}$ and $\|\pa_xu\|_{H^{k-1}}$.

It is more convenient for us to work with $(u,\psi)$ as these quantities are the \emph{Riemann invariants} for the local ARZ system \eqref{eq:ARZ}.

We denote by $E_k$ the homogeneous $\dot{H}^k$ energy on $u$ and $\psi$:
\[
E_k(t) = \|u(t,\cdot)\|_{\dot H^k}^2+\|\psi(t,\cdot)\|_{\dot H^k}^2.
\]

\begin{theorem}[Energy estimate]\label{thm:energy}
 Let $(\rho,u)$ be a classical solution of \eqref{eq:main} with initial data $(\rho_0,u_0)$ satisfying \eqref{A1}-\eqref{A4}. Then, for any $1\leq s\leq k$, we have
 \begin{equation}\label{eq:energy}
  E_s'(t) \leq C \Big(1 + \|\pa_x \rho\|_{L^\infty} +\|\pa_x u\|_{L^\infty} + \|\pa_x \psi\|_{L^\infty} \Big) \big(E_s + \|\rho\|_{L^2}^2\big),
\end{equation}
where the constant $C$ depends on
\[
C=C\left(k, m, w(0), \|U\|_{C^{\lceil k \rceil +1}([0,\rho_M])},\|U^{-1}\|_{C^{\lceil k\rceil}([U(\rho_M),1])}\right).
\]
\end{theorem}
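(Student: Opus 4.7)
The plan is to control $\|u(t,\cdot)\|_{\dot{H}^s}$ and $\|\psi(t,\cdot)\|_{\dot{H}^s}$ separately using transport-type equations. For $\psi$ the equation $\pa_t\psi+u\pa_x\psi=0$ is already in transport form. For $u$, I would reuse the calculation from the proof of Proposition \ref{prop:MP} to obtain
\[\pa_t u + a\,\pa_x u = R,\qquad a := u + \rho U'(\rho)\e^\mrhob,\]
\[R := -U(\rho)\e^\mrhob\int_0^\infty w'(z)\rho(x+z)\bigl(u(x+z)-u(x)\bigr)\,\d z.\]

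Before applying $\Lambda^s$, I would first establish an auxiliary Sobolev bound on $\rho$, since the target inequality only allows $\|\rho\|_{L^2}^2$ on the right-hand side. Inverting the algebraic relation gives $\rho = U^{-1}\bigl((u-\psi)\e^\rhob\bigr)$; the smoothness of $U^{-1}$ provided by \eqref{eq:Usmooth} (or \eqref{eq:Usmoothp}), combined with Moser's composition estimate and the Kato-Ponce product bound, yields $\|\rho\|_{\dot{H}^s}\leq C\bigl(E_s^{1/2}+\|\rhob\|_{\dot{H}^s}\bigr)$. Next, integration by parts using \eqref{eq:w} gives
\[\pa_x\rhob(x) = -w(0)\rho(x) + \int_0^\infty\bigl(-w'(z)\bigr)\rho(x+z)\,\d z,\]
representing $\pa_x\rhob$ as $-w(0)\rho$ plus convolution against a nonnegative kernel $-w'$ of total mass at most $w(0)$. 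This convolution is a bounded Fourier multiplier commuting with $\Lambda^{s-1}$, so $\|\rhob\|_{\dot{H}^s}\leq 2w(0)\|\rho\|_{\dot{H}^{s-1}}$. A Gagliardo-Nirenberg interpolation of $\|\rho\|_{\dot{H}^{s-1}}$ between $\|\rho\|_{L^2}$ and $\|\rho\|_{\dot{H}^s}$, followed by Young's inequality to absorb the $\|\rho\|_{\dot{H}^s}$ term back to the left, produces the key bound
\[\|\rho\|_{\dot{H}^s}\leq C\bigl(\|\rho\|_{L^2}+E_s^{1/2}\bigr).\]

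With this bound in hand, the $\psi$-estimate is routine: apply $\Lambda^s$ to the transport equation, pair with $\Lambda^s\psi$ in $L^2$, integrate by parts in the transport term to generate $\frac12\int\pa_xu\,(\Lambda^s\psi)^2\,\d x$, and estimate the commutator $[\Lambda^s,u]\pa_x\psi$ by the Kato-Ponce inequality; this yields a contribution of the form $C(\|\pa_xu\|_{L^\infty}+\|\pa_x\psi\|_{L^\infty})E_s$. For the $u$-equation the same commutator strategy applies, with two differences: the coefficient $\pa_xa$ generates an additional $\|\pa_x\rho\|_{L^\infty}$ factor, and the Kato-Ponce commutator $[\Lambda^s,a]\pa_xu$ requires $\|\Lambda^s a\|_{L^2}$, which Moser and Kato-Ponce reduce to $\|\rho\|_{\dot{H}^s}$, now bounded via the auxiliary estimate above.

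The principal obstacle is the nonlocal source $R$. I would split
\[R = -U(\rho)\e^\mrhob\bigl(\tilde A - u\tilde B\bigr),\quad \tilde A(x) := \int_0^\infty w'(z)\rho(x+z)u(x+z)\,\d z,\quad \tilde B(x) := \int_0^\infty w'(z)\rho(x+z)\,\d z.\]
The key observation is that $w$ being bounded and non-increasing forces $w'\in L^1(\R_+)$ with $\|w'\|_{L^1}\leq w(0)$, so the operator $g\mapsto\int_0^\infty w'(z)g(\cdot+z)\,\d z$ is a bounded Fourier multiplier on every $\dot{H}^\sigma$ that commutes with $\Lambda^s$. Consequently $\Lambda^s$ passes through both $\tilde A$ and $\tilde B$, and Kato-Ponce products deliver $\|\Lambda^s\tilde A\|_{L^2}+\|\Lambda^s(u\tilde B)\|_{L^2}\leq C(\|\Lambda^s u\|_{L^2}+\|\Lambda^s\rho\|_{L^2})$. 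Multiplying by the smooth uniformly bounded outer factor $U(\rho)\e^\mrhob$ and applying the auxiliary bound on $\|\rho\|_{\dot{H}^s}$ once more yields $\|\Lambda^s R\|_{L^2}\leq C(\|\rho\|_{L^2}+E_s^{1/2})$, so Cauchy-Schwarz and Young produce $\bigl|\int\Lambda^sR\,\Lambda^su\,\d x\bigr|\leq C(E_s+\|\rho\|_{L^2}^2)$. Summing the $\psi$- and $u$-estimates yields \eqref{eq:energy}.
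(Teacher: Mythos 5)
Your proposal is correct and follows essentially the same route as the paper's proof: the auxiliary bound $\|\rho\|_{\dot H^s}\leq C\big(E_s^{1/2}+\|\rho\|_{L^2}\big)$ is obtained by inverting $u-\psi=U(\rho)\e^\mrhob$ and combining composition, Leibniz, the identity $\pa_x\rhob=-w(0)\rho+\int_0^\infty(-w'(z))\rho(\cdot+z)\,\d z$, interpolation, and Young, exactly as in Proposition \ref{prop:rhoHk} and \eqref{eq:rhoHk2}; the transport parts are treated via integration by parts and the commutator estimate (Lemma \ref{lem:commutator}); and the nonlocal source is controlled using $\|w'\|_{L^1}\leq w(0)$. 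The only cosmetic deviation is your pre-splitting of the integrand into $\rho u$ and $\rho$ pieces and viewing $g\mapsto\int_0^\infty w'(z)g(\cdot+z)\,\d z$ as a bounded Fourier multiplier, whereas the paper applies Leibniz to the outer product first and then Minkowski's integral inequality to the nonlocal factor — the two formulations yield the identical bound.
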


The proof of Theorem \ref{thm:energy} relies on standard commutator estimates (Lemma \ref{lem:commutator}). However, the nonlinearity in $U$ and the nonlocal terms present additional challenges. We employ the composition estimate (Lemma \ref{lem:composition}) to address both the nonlinearity and nonlocality. Due to the length of the proof, we will defer it to the Appendix.

Now, we define the energy
\[Y(t)=E_1(t)+E_k(t)=\|\pa_x u(t,\cdot)\|_{L^2}^2+\|\pa_x u(t,\cdot)\|_{L^2}^2+\|\pa_x \psi(t,\cdot)\|_{\dot H^{k-1}}^2+\|\pa_x \psi(t,\cdot)\|_{\dot H^{k-1}}^2.\]
Clearly, $Y(t)$ is equivalent to $\|\pa_x u(t,\cdot)\|_{ H^{k-1}}^2+\|\pa_x \psi(t,\cdot)\|_{ H^{k-1}}^2$. 
Under assumption \eqref{A3}, we know $Y(0)$ is finite.

The energy estimate \eqref{eq:energy} provides a control of the growth of $Y$:
\begin{align}\label{eq:ygrowth}
    Y'(t) \leq C \Big(1 + \|\pa_x \rho\|_{L^\infty} +\|\pa_x u\|_{L^\infty} + \|\pa_x \psi\|_{L^\infty} \Big)(Y(t)+\|\rho\|_{L^2}^2).
\end{align}
Note that $\|\rho\|_{L^2}$ has an a priori bound thanks to \eqref{A1} and \eqref{A2}. Indeed, we have
\begin{equation}\label{eq:rhoL2}
 \|\rho\|_{L^2}^2\leq\|\rho\|_{L^1}\|\rho\|_{L^\infty}\leq m.	
\end{equation}

Next, we show $Y(t)$ is bounded locally in time. For $k>3/2$, we apply Sobolev embedding and \eqref{eq:rhoHk2} to obtain  
\begin{align*}
&\|\pa_x \rho\|_{L^\infty} +\|\pa_x u\|_{L^\infty} + \|\pa_x \psi\|_{L^\infty} \leq C\big(\|\pa_x \rho\|_{H^{k-1}} +\|\pa_x u\|_{H^{k-1}} + \|\pa_x \psi\|_{H^{k-1}}\big)\\
&\leq C\big(\|\pa_x u\|_{H^{k-1}} + \|\pa_x \psi\|_{H^{k-1}}+\|\rho\|_{L^2} \big)\leq C(Y(t)+m)^{1/2}.
\end{align*}
Then the equation \eqref{eq:ygrowth} has the form
\[Y'(t)\leq C(Y(t)+1)^{3/2}.\]
Classical Cauchy-Lipschitz theory implies local well-posedness: there exists a time $T>0$, depending on $Y(0)$ and $C$, such that $Y(t)$ is bounded for any $t\in[0,T]$.

As long as $Y(t)$ is bounded, we have $\pa_x u(t,\cdot)\in H^{k-1}$. Also, $\pa_x \rho(t,\cdot)\in H^{k-1}$ due to \eqref{eq:rhoHk2} and \eqref{eq:rhoL2}.
Hence, the regularity assumption \eqref{A3} holds for $(\rho(t),u(t))$.

\subsection{Regularity criteria}
In this part, we show that the regularity criterion \eqref{eq:condition} implies the boundedness of $Y(t)$, thereby ensuring the smoothness of the solution $(\rho(t),u(t))$.

By applying the Gr\"onwall inequality to \eqref{eq:ygrowth}, we see that
\[
    Y(T)\leq (Y(0)+m)\exp \left(C\int_0^T \big(1 + \|\pa_x \rho\|_{L^\infty} +\|\pa_x u\|_{L^\infty} + \|\pa_x \psi\|_{L^\infty} \big) dt\right).
\]
Therefore, a sufficient condition to ensure boundedness of $Y(T)$ is
\begin{equation}\label{eq:condition2}
\int_0^T \big(\|\pa_x \rho\|_{L^\infty} +\|\pa_x u\|_{L^\infty} + \|\pa_x \psi\|_{L^\infty} \big) dt<\infty.	
\end{equation}

To obtain \eqref{eq:condition}, we show that $\|\pa_x u\|_{L^\infty}$ and $\|\pa_x \psi\|_{L^\infty}$ are controlled by $\|\pa_x \rho\|_{L^\infty}$.
\begin{proposition}\label{prop:uxpsix} For any $t\in[0,T]$, we have
\begin{align*}
  \|\pa_x u(t,\cdot)\|_{L^\infty} &\leq C\left[\exp\left(C\int_0^t \|\pa_x\rho(\tau,\cdot)\|_{L^\infty}d\tau\right)+\|\pa_x\rho(t,\cdot)\|_{L^\infty} + 1 \right],\\
  \|\pa_x \psi(t,\cdot)\|_{L^\infty} &\leq C\exp\left(C\int_0^t \|\pa_x\rho(\tau,\cdot)\|_{L^\infty}d\tau\right),
\end{align*}
where the constant $C$ depends on $\|\psi_0'\|_{L^\infty}$, $\|U\|_{C^1([0,\rhoM])}$ and $w(0)$.
\end{proposition}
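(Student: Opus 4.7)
The plan is to exploit the algebraic relation $u=\psi+U(\rho)e^\mrhob$ together with the already-established monotonicity $\pa_x\psi\geq 0$ (from Proposition \ref{prop:MP}) to decouple the bounds into a single Gr\"onwall-type argument on the characteristic $\widetilde{X}$.

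First, I would differentiate the relation $u=\psi+U(\rho)e^\mrhob$ in $x$ to obtain the pointwise identity
\[
\pa_x u = \pa_x\psi + U'(\rho)e^\mrhob\pa_x\rho + U(\rho)\pa_x(e^\mrhob).
\]
Using the maximum principle $\rho\in[0,\rhoM]$ together with the regularity of $U$, the bound $e^\mrhob\le 1$ from \eqref{eq:nonlocalbound}, and the pointwise bound $|\pa_x(e^\mrhob)|\le w(0)$ from \eqref{eq:nonlocalpax}, I get the two-sided control
\[
|\pa_x u|\le |\pa_x\psi| + C\bigl(|\pa_x\rho|+1\bigr).
\]
The crucial observation is that, since $\pa_x\psi\geq 0$ everywhere by \eqref{eq:psix}, we may drop $\pa_x\psi$ from the \emph{negative} part and obtain the one-sided estimate
\[
-\pa_x u \le -U'(\rho)e^\mrhob\pa_x\rho - U(\rho)\pa_x(e^\mrhob)\le C\bigl(|\pa_x\rho|+1\bigr).
\]
This is the step that makes the rest of the proof linear rather than coupled.

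Next, I would return to the closed formula \eqref{eq:psix} for $\pa_x\psi$ along $\widetilde{X}$. Since $\psi_0'\ge 0$, one has
\[
\pa_x\psi(t,\widetilde{X}(t,x)) = \psi_0'(x)\exp\!\left(\int_0^t (-\pa_x u)(\tau,\widetilde{X}(\tau,x))\,d\tau\right),
\]
and plugging in the one-sided bound on $-\pa_x u$ from the previous step yields
\[
\pa_x\psi(t,\widetilde{X}(t,x))\le\|\psi_0'\|_{L^\infty}\exp\!\left(C\int_0^t\bigl(\|\pa_x\rho(\tau,\cdot)\|_{L^\infty}+1\bigr)d\tau\right).
\]
Because $\widetilde{X}(t,\cdot)\colon\R\to\R$ is a bijection for a classical solution, the supremum on the left-hand side covers all of $\R$, which after absorbing the $e^{CT}$ factor produces the desired bound on $\|\pa_x\psi(t,\cdot)\|_{L^\infty}$.

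Finally, I would take $L^\infty$-norms in the identity for $\pa_x u$ displayed above and substitute the $\pa_x\psi$ bound just obtained, which yields the stated estimate on $\|\pa_x u(t,\cdot)\|_{L^\infty}$. The argument has no real obstacle; the only point that requires care is recognizing that \eqref{A4} (propagated by Proposition \ref{prop:MP}) is precisely what allows the coupled system for $\pa_x\psi$ and $\pa_x u$ to be decoupled through the sign condition $\pa_x\psi\ge 0$. Without this sign, one would end up with a Gr\"onwall inequality involving $\exp(\int_0^t\|\pa_x u\|_{L^\infty}d\tau)$ on the right, which could not be closed against $\|\pa_x\rho\|_{L^\infty}$ alone.
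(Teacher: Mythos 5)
Your proposal is correct and follows essentially the same route as the paper: differentiate $u=\psi+U(\rho)e^\mrhob$ in $x$, use $\pa_x\psi\ge 0$ (from \eqref{A4}, propagated by Proposition \ref{prop:MP}) to obtain a one-sided lower bound $\pa_x u\geq -C\bigl(|\pa_x\rho|+1\bigr)$ depending only on $\pa_x\rho$, insert this into the closed formula \eqref{eq:psix} to bound $\pa_x\psi$, and then feed that bound back into the identity for $\pa_x u$. Your explicit remark that the sign condition $\pa_x\psi\ge 0$ is what decouples the otherwise circular Gr\"onwall estimate is exactly the mechanism the paper uses, even if it states it less explicitly.
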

\begin{remark}
	The quantity $\|\psi_0'\|_{L^\infty}$ is finite due to assumption \eqref{A3} and Sobolev embedding.
\end{remark}
\begin{proof}[Proof of Proposition \ref{prop:uxpsix}]
First, the lower bound $\pa_x\psi\geq0$ follows from \eqref{A4}. 

To obtain a lower bound on $\pa_xu$, we differentiate the $u$-equation in \eqref{eq:main} in $x$ and get
\begin{equation}\label{eq:uxlower}
\pa_xu = \pa_x\psi+U'(\rho)e^\mrhob\pa_x\rho- U(\rho)\pa_x(e^\mrhob)\geq -\|U\|_{C^1([0,\rhoM])}|\pa_x\rho|-w(0).
\end{equation}
where we have used \eqref{A4}, \eqref{eq:nonlocalbound} and \eqref{eq:nonlocalpax}.

Next, we derive an upper bound for $\pa_x\psi$. 
From \eqref{eq:psix} and applying \eqref{eq:uxlower}, we get
\[
 \pa_x \psi(t,\widetilde{X}(t,x))\leq\psi'_0(x)\exp\left(\|U\|_{C^1([0,\rhoM])}\int_0^t\|\pa_x\rho(\tau,\cdot)\|_{L^\infty}\,d\tau+w(0)t\right).
\]

Finally, we work on the upper bound of $\pa_xu$.
\begin{align*}
\pa_xu &= \pa_x\psi+U'(\rho)e^\mrhob\pa_x\rho- U(\rho)\pa_x(e^\mrhob)\\
&\leq \psi'_0\exp\left(\|U\|_{C^1([0,\rhoM])}\int_0^t\|\pa_x\rho(\tau,\cdot)\|\,d\tau+w_0t\right)+\|U\|_{C^1([0,\rhoM])}|\pa_x\rho|+w(0).
\end{align*}
\end{proof}

Using Proposition \ref{prop:uxpsix}, we argue that the regularity criterion \eqref{eq:condition} implies \eqref{eq:condition2} and thus ensures the regularity of the solution. This finishes the proof of Theorem \ref{thm:lwp}.

\subsection{Finite time blowup for local ARZ model}
The regularity criterion \eqref{eq:condition} suggests that the solution to our system \eqref{eq:main} is globally regular as long as $\pa_x\rho$ is bounded. However, achieving boundedness of $\pa_x\rho$ is challenging. Here, we illustrate  that the criterion \eqref{eq:condition} is violated in finite time for generic initial data for the local ARZ model \eqref{eq:ARZ}, as stated in Theorems \ref{thm:arzblowupLWR} and \ref{thm:arzblowupGENERAL}.

We provide a proof of Theorem \ref{thm:arzblowupGENERAL} below, and Theorem \ref{thm:arzblowupLWR} follows as a special case of Theorem \ref{thm:arzblowupGENERAL} where \eqref{eq:Ublowup} is satisfied with $\beta=1$.

\begin{proof}[Proof of Theorem \ref{thm:arzblowupGENERAL}]
Denote $v:=\pa_xu$. We begin by differentiating the $u$-equation in \eqref{eq:Z} with respect to $x$, and obtaining the dynamics of $v$ along the characteristic path $X(t,x)$:
\[
\dot v = \big(\pa_t + (u+\rho U'(\rho)\pa_x\big) v = -v\big(v+\pa_x(\rho U'(\rho))\big) = -v\big(v+(\rho U''(\rho)+U'(\rho))\pa_x\rho\big).
\]
Recall that from the $u$-equation in \eqref{eq:ARZ}, we have the relation 
\begin{equation}\label{eq:vrelationARZ}
v = \pa_x\psi+U'(\rho)\pa_x \rho. 	
\end{equation}
Then
\begin{align*}
v+(\rho U''(\rho)+U'(\rho))\pa_x\rho&=v+\frac{\rho U''(\rho)+U'(\rho)}{U'(\rho)}(v-\pa_x\psi)\\
&=\left(2+\frac{\rho U''(\rho)}{U'(\rho)}\right)v-\left(1+\frac{\rho U''(\rho)}{U'(\rho)}\right)\pa_x\psi.
\end{align*}

Now, take $x_0\in\R$ such that $\rho_0(x_0)\leq\beta$ and $u_0'(x_0)<0$. From the $\rho$-equation in \eqref{eq:ARZ} and \eqref{A4} we get
\[
\dot \rho = \pa_t \rho+(u+\rho U'(\rho))\pa_x \rho = -\rho\pa_x\psi\leq 0.
\]
This implies
\[
\rho(t,X(t,x_0))\leq \rho_0(x_0)\leq\beta,
\]
for any time $t\geq0$, as long as classical solution exists.

Then, the assumption \eqref{eq:Ublowup} ensures 
\begin{equation}\label{eq:Ublowup2}
 \frac{\rho(t,X(t,x_0)) U''(\rho(t,X(t,x_0)))}{U'(\rho(t,X(t,x_0)))}\geq-1.	
\end{equation}

Next, we estimate the dynamics of $v$ along $X(t,x_0)$ using \eqref{A4} and \eqref{eq:Ublowup2}, and obtain
\[\dot{v}(t,X(t,x_0))\leq -v(t,X(t,x_0))(1\cdot v(t,X(t,x_0))-0)=-v^2(t,X(t,x_0)),\]
with initial data $v_0(x_0)<0$. Clearly, there exists a finite time $T$ such that 
\begin{equation}\label{eq:vblowup}
 \lim_{t\to T-} v(t,X(t,x_0))=-\infty.
\end{equation}
unless blowup happens earlier.

Owing to \eqref{eq:vrelationARZ} and $U'(\rho)<0$, we have
$\pa_x\rho=\frac{v -\pa_x\psi}{U'(\rho)}\geq \frac{v}{U'(\rho)}.$
Hence, \eqref{eq:vblowup} implies
\[ \lim_{t\to T-} \rho(t,X(t,x_0))=\infty.\]
\end{proof}

\section{Global well-posedness}\label{sec:gwp}

We now turn our attention to global well-posedness for our nonlocal ARZ model \eqref{eq:main}. The goal is to show that, with the help of the nonlocal interaction, we can identify a class of subcritical initial data  where there is no breach of the regularity criterion \eqref{eq:condition} for any finite time $T$. Consequently, the solution remains globally regular. This feature distinguishes our model from the local ARZ model \eqref{eq:ARZ}, where generic initial data result in finite-time shock formation.

We will work with the nonlocal interaction \eqref{eq:lookahead} and the special weight function \eqref{eq:kernel}. The structure of this weight function gives rise to the relation:
\begin{equation}\label{eq:rhobx}
 \pa_x \rhob = -\rho,
\end{equation}
which will play a crucial role in our analysis.

Throughout the section, we assume that the flux $f$ has the form
\[f(\rho)=\rho U(\rho),\]
where $U$ satisfies \eqref{eq:Ucond}. We also assume $f$ satisfies \eqref{eq:fluxcc}, namely $f$ is either concave $(\rhoc=1)$ or concave-convex $(\rhoc<1)$. In particular, \eqref{eq:Ucond} and \eqref{eq:fluxcc} together imply
\begin{equation}\label{eq:f0}
f(0)=0,\quad f'(0)>0,\quad f''(0)<0,\quad\text{and}\quad f(1)=0.	
\end{equation}

Let us characterize our approach as follows:
\begin{enumerate}
    \item First, establish the dynamics of $(\rho,\pa_x \rho)$ along characteristic paths, and describe the corresponding trajectories in the phase plane.
    \item Construct a threshold function $\eta$ in the phase plane, based on the threshold function $\sigma$ that has been used to describe the region of subcritical initial data for the first-order nonlocal model \eqref{eq:basicn}, as in \cite{lee2022sharp,hamori2023sharp}.
    \item Establish a comparison principle to connect $\eta$ to the trajectories of the dynamics of $(\rho,\pa_x \rho)$, and show that $\eta$ determines a region of subcritical initial data for our model \eqref{eq:main}. This verifies Theorem \ref{thm:main}.
\end{enumerate}

\subsection{Phase plane dynamics on $(\rho, \pa_x\rho)$}

 We proceed with a derivation of a coupled dynamics on $(\rho,\pa_x \rho)$ along the characteristic path $X(t,x)$. Let us denote
$$d\coloneqq \pa_x \rho,\quad f(\rho) \coloneqq \rho U(\rho), \quad F \coloneqq \pa_x\psi/\rho, \quad G \coloneqq \pa_xF/\rho. $$

Note that the auxiliary quantities $F$ and $G$ satisfy the transport equations:
\[\pa_tF+u \pa_xF = 0, \quad \pa_tG+u \pa_xG = 0.\]
This leads to the a priori bounds:
\begin{equation}\label{eq:FGbound}
\|F(t,\cdot)\|_{L^\infty} = \|F_0\|_{L^\infty},\quad \|G(t,\cdot)\|_{L^\infty} = \|G_0\|_{L^\infty},
\end{equation}
justifying that the assumption \eqref{A5} propagates in time.
Moreover, \eqref{A4} implies that 
\begin{equation}\label{eq:Fpositive}
F(t,x)\geq0,\quad \forall~x\in\R,\,\,t\in[0,T].	
\end{equation}
On the other hand, we do not impose any sign condition on $G_0$. 

Now, we derive the dynamics of $(\rho,d)$ along the characteristic path $X(t,x)$. From the dynamics of $\rho$, we apply \eqref{eq:rhobx} to \eqref{eq:rhodynamics} and get:
\[
  \dot \rho = -\rho\pa_x\psi + \rho U(\rho) e^\mrhob (-\rho)= -e^\mrhob\rho f(\rho) -\rho^2F.
\]
Differentiating the $\rho$-equation in \eqref{eq:main} with respect to $x$, we acquire:
\begin{align*}
 \dot{d} &= \pa_t d+\big(u+\rho U'(\rho) e^\mrhob\big) \pa_xd =  \pa_t d+\big(f'(\rho)e^\mrhob + \psi\big) \pa_xd\\
  &=-e^\mrhob \Big(f''(\rho) d^2 + (2\rho f'(\rho) + f(\rho)) d   + f(\rho) \rho^2\Big)  \nonumber -3\pa_x\psi d- \rho\pa^2_{xx}\psi, \nonumber\\
  &=  -e^\mrhob \Big(f''(\rho) d^2 + (2\rho f'(\rho) + f(\rho)) d   + f(\rho) \rho^2\Big)   -3\rho d F- \rho^3 G. 
\end{align*}
All together, we get the coupled dynamics
\begin{equation}\label{eq:coupled}
 \begin{cases}
    \dot \rho = -e^\mrhob\rho f(\rho) -\rho^2F\\    
   \dot d = -e^\mrhob \Big(f''(\rho) d^2 + [2\rho f'(\rho)   + f(\rho)] d   + \rho^2 f(\rho) \Big) -3\rho d F- \rho^3 G.
 \end{cases}
\end{equation}
Trajectories in the $(\rho,d)$ phase plane which are governed by \eqref{eq:coupled} may be expressed as $d=\d(\rho)$, in which case they must satisfy the ordinary differential equation:
    \begin{equation}\label{eq:neode}
  \d'(\rho)=\frac{e^\mrhob \Big(f''(\rho) \d^2 + [2\rho f'(\rho)   + f(\rho)] \d   + \rho^2 f(\rho) \Big) +3\rho \d F+ \rho^3 G }{e^\mrhob \rho f(\rho) +\rho^2F}.
   \end{equation}
    Our goal will be to analyze trajectories in the phase plane and determine whether they are bounded or not. We will construct a subcritical region
\[\Sigma \subset [0,1]\times\R,\]
such that for any initial data $(\rho_0,d_0)\in\Sigma$, the solution of \eqref{eq:neode} with $\d(\rho_0)=d_0$ is bounded. The boundedness of all trajectories $\d$ can ensure the regularity criterion \eqref{eq:condition}, thereby implying global well-posedness of \eqref{eq:main}.
    
Understanding the behavior of $\eqref{eq:neode}$ is a challenging task, particularly because the equation is determined by the nonlocal quantity $e^\mrhob$ which depends globally on $\rho$. Our strategy is to compare the system with the first-order nonlocal model \eqref{eq:basicn}-\eqref{eq:lookahead}, which is a special case of our second-order model \eqref{eq:main} with $\psi\equiv0$, and hence $F, G\equiv0$. 

\subsection{The threshold $\sigma$ for the first-order model}
The behavior of \eqref{eq:neode} can be thoroughly studied in \cite{lee2022sharp,hamori2023sharp} for the first-order model, where $F,G\equiv0$. In this case, \eqref{eq:neode} reduces to
\begin{equation}\label{eq:eode}
  \d'(\rho)=\frac{ f''(\rho) \d^2 + (2\rho f'(\rho) + f(\rho)) \d   + \rho^2 f(\rho)  }{ \rho f(\rho) }.
\end{equation}
A main feature of \eqref{eq:eode} is that it does not depend on any nonlocal information.

There is a critical threshold $\sigma(\rho)$ so that trajectories originating beneath this curve are bounded from above. We restate the result presently:
    
\begin{theorem}\label{thm:sigma}
Let $F=G=0$. There exists a unique trajectory $\sigma$ such that for any 
\begin{equation}\label{eq:sigma0}
 (\rho_0,d_0)\in  \big\{(\rho,d) : 0\leq\rho\leq\rhoc, d\leq\sigma(\rho)\big\},	
\end{equation}
the solution of the coupled dynamics \eqref{eq:coupled} with initial condition $(\rho_0,d_0)$ is bounded in all time.

Moreover, the threshold function $\sigma$ can be uniquely defined by:
\begin{equation}\label{eq:sigmaode}
\sigma'(\rho) = \frac{f''(\rho)\sigma^2+(2\rho f'(\rho) + f(\rho))\sigma+\rho^2f(\rho)}{\rho f(\rho)}=:\mathcal{F}(\rho,\sigma),
\end{equation}
with initial conditions
\[
  \sigma(0) = 0,\quad\text{and}\quad  \sigma'(0) = -\frac{2f'(0)}{f''(0)}>0.
\]
In particular, when $f(\rho) = \rho(1-\rho)^J$ with $J\geq1$, the threshold function $\sigma$ is explicitly given as 
\[
    \sigma(\rho) = \frac{\rho(1-\rho)}{J}.
\]
\end{theorem}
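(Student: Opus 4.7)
The plan is to reduce the system to a scalar ODE in the phase plane, construct $\sigma$ as a distinguished trajectory emanating from the singular point at the origin, and use the no-crossing principle for scalar ODEs to establish invariance of the subcritical region.

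First I would specialize \eqref{eq:coupled} to $F=G=0$, obtaining $\dot\rho=-e^\mrhob\rho f(\rho)$ and $\dot d=-e^\mrhob(f''(\rho)d^2+(2\rho f'(\rho)+f(\rho))d+\rho^2 f(\rho))$. Since $f(\rho)>0$ on $(0,1)$, $\dot\rho<0$ there, so $\rho$ is strictly monotone along each characteristic and can serve as a parameter for the trajectory. The common factor $e^\mrhob$ cancels in $\d'(\rho)=\dot d/\dot\rho$, producing the autonomous ODE \eqref{eq:eode}. Boundedness of $\pa_x\rho$ along each characteristic then reduces to boundedness of a single scalar trajectory $\d(\rho)$ on $(0,\rho_0]$, a question that is now entirely local in $\rho$.

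The principal difficulty is constructing $\sigma$ itself, because $\mathcal F(\rho,\d)$ has a $0/0$ singularity at the origin (the denominator $\rho f(\rho)\sim f'(0)\rho^2$ vanishes). To detect the admissible slope I would insert the ansatz $\sigma(\rho)=c\rho+O(\rho^2)$ and balance leading orders: the numerator becomes $(f''(0)c^2+3f'(0)c)\rho^2+O(\rho^3)$ and the denominator $f'(0)\rho^2+O(\rho^3)$, so the self-consistency condition $\sigma'(0)=c$ reduces to the quadratic $c(f''(0)c+2f'(0))=0$. This gives the trivial branch $c=0$ and, by \eqref{eq:f0}, the distinguished positive branch $c=-2f'(0)/f''(0)>0$. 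To rigorously produce a unique smooth trajectory on the distinguished branch, I would desingularize by $\sigma(\rho)=\rho g(\rho)$; the resulting equation for $g$ has a hyperbolic equilibrium of the associated vector field at $g(0)=-2f'(0)/f''(0)$, and a standard invariant-manifold argument (or equivalently a weighted Banach fixed-point scheme) produces a unique smooth solution emanating from the origin. I would then continue $\sigma$ forward by the classical ODE theory up to either $\rho_c$ or the first $\rho_*\in(0,\rho_c)$ at which $\sigma\to-\infty$.

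With $\sigma$ in hand, the invariance claim follows from standard comparison. Any other trajectory $\tilde\d$ of \eqref{eq:eode} with $\tilde\d(\rho_0)\le\sigma(\rho_0)$ cannot cross $\sigma$ on $(0,\rho_0]$ by the Lipschitz uniqueness of \eqref{eq:eode} off the singular point, so $\tilde\d\le\sigma$ there and is bounded above. For the lower bound, at any putative $\rho_1\in(0,\rho_0]$ where $\tilde\d$ tries to diverge to $-\infty$, the leading term $\d'(\rho)\sim f''(\rho)\d^2/(\rho f(\rho))<0$ (using $f''<0$ on $[0,\rho_c)$) forces $\tilde\d$ to increase as $\rho$ decreases and rules out such blowup. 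Together these bounds give $\pa_x\rho\in L^\infty$ along characteristics, delivering global regularity via the criterion \eqref{eq:condition}. Finally, for the Pipes flux $f(\rho)=\rho(1-\rho)^J$ I would verify by direct substitution that $\sigma(\rho)=\rho(1-\rho)/J$ solves \eqref{eq:sigmaode} with $\sigma(0)=0$ and $\sigma'(0)=1/J=-2f'(0)/f''(0)$ (using $f'(0)=1$ and $f''(0)=-2J$); uniqueness from the previous step identifies it as the threshold. The hardest step of the whole argument is the desingularization at the origin, which lies outside the scope of Picard--Lindel\"of and drives the invariant-manifold construction above.
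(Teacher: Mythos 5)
Your proposal is correct and follows essentially the same route as the paper: the paper does not prove Theorem \ref{thm:sigma} in-house but defers to \cite{hamori2023sharp}, and its accompanying remarks describe exactly your structure — reduction to the phase-plane ODE \eqref{eq:eode} after the nonlocal factor cancels, identification of $\sigma$ as the unique trajectory through the origin with nonzero slope $\sigma'(0)=-2f'(0)/f''(0)$, and a no-crossing comparison so that all trajectories starting in the subcritical region stay below $\sigma$ (with the dominant $f''\d^2$ term ruling out divergence to $-\infty$ for $\rho\leq\rhoc$). Your desingularization at the singular point and the direct verification of $\sigma(\rho)=\rho(1-\rho)/J$ for $f(\rho)=\rho(1-\rho)^J$ simply fill in details the paper delegates to the cited reference.
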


For the proof of the theorem, we refer to \cite{hamori2023sharp}. Here we make some remarks about the theorem that are relevant to our discussion.

The threshold function $\sigma$ is a trajectory that satisfies \eqref{eq:eode} with $\sigma(0)=0$. However, there are infinite many trajectories of this type. It has been proven that $\sigma$ is the only one with $\sigma'(0)\neq0$. Since trajectories cannot cross each other when $\rho\in(0,1)$, all trajectories initiated from the subcritical region in \eqref{eq:sigma0} are bounded from above by $\sigma$. 

The threshold condition $d\leq\sigma(\rho)$ is \emph{sharp}. Indeed, it has been shown in \cite{hamori2023sharp} that for any initial condition $(\rho_0,d_0)$ such that $0\leq\rho\leq\rhoc$ and $d_0>\sigma(\rho_0)$, the coupled dynamics \eqref{eq:coupled} must develop a finite-time singularity where $d(t)\to\infty$.
Sharp results are also available for $\rhoc<\rho\leq1$ when the flux is concave-convex. Another threshold function $\gamma$ is introduced to distinguish another type of finite-time singularity formation, where $d(t)\to-\infty$. See Figure \ref{fig:eta} for an illustration of $\sigma$ and $\eta$. We omit the details, and will focus on the region when $\rho\leq\rhoc$.

\subsection{A new threshold $\eta$}
We now construct a threshold function that helps us to determine a subcritical region for our second-order model \eqref{eq:main}. The idea is to compare the “worst case" behavior of \eqref{eq:neode} with that of \eqref{eq:eode}.

To effectively control the contribution of the $F$ and $G$ by the local dynamics \eqref{eq:main}, we investigate the structure of the right-hand side of \eqref{eq:neode}, and make use of the following mediant inequality. 
\begin{lemma}[Mediant inequality]\label{lem:mediant}
 Let $A_0,A_1\in \R$ and $B_0, B_1>0$. Then
 \[\frac{A_0+A_1}{B_0+B_1}\geq\min\left\{\frac{A_0}{B_0},\frac{A_1}{B_1}\right\}.\]
\end{lemma}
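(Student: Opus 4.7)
The plan is to reduce the problem to a single chain of elementary inequalities by setting $m := \min\{A_0/B_0, A_1/B_1\}$ and showing directly that $(A_0+A_1)/(B_0+B_1) \geq m$. First I would observe that because $B_0, B_1 > 0$, multiplying the defining inequalities $A_0/B_0 \geq m$ and $A_1/B_1 \geq m$ by $B_0$ and $B_1$ respectively preserves the direction, giving $A_0 \geq m B_0$ and $A_1 \geq m B_1$. Adding these two inequalities yields $A_0 + A_1 \geq m(B_0 + B_1)$, and since $B_0 + B_1 > 0$ we may divide through to conclude $(A_0+A_1)/(B_0+B_1) \geq m$, which is exactly the claim.

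There is no real obstacle here: the only subtlety worth flagging is the positivity hypothesis on $B_0$ and $B_1$, which is used twice (to rearrange the defining minimum inequalities, and to divide at the end). No assumption of any kind is needed on the signs of $A_0, A_1$, so the statement is valid in full generality. If desired, equality can be characterized as the case $A_0/B_0 = A_1/B_1$, but this is not needed for the subsequent use of the lemma in the comparison principle. Thus a clean three- or four-line argument suffices.
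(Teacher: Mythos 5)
Your proof is correct, and cleanly stated. It differs mildly in decomposition from the paper's: you set $m=\min\{A_0/B_0,A_1/B_1\}$, clear denominators to get $A_0\geq mB_0$ and $A_1\geq mB_1$, add, and divide by $B_0+B_1>0$. The paper instead writes the mediant directly as a convex combination,
\[
\frac{A_0+A_1}{B_0+B_1}=\frac{B_0}{B_0+B_1}\cdot\frac{A_0}{B_0}+\frac{B_1}{B_0+B_1}\cdot\frac{A_1}{B_1},
\]
with weights summing to $1$, so that the mediant lies between the two fractions. Both are two- or three-line arguments that use the positivity of $B_0,B_1$ in the same essential places; the convex-combination view has the small advantage of yielding the upper bound by $\max\{A_0/B_0,A_1/B_1\}$ simultaneously (which you note but do not prove), while your clearing-denominators argument is more ``bare hands'' and arguably the more direct route to the single inequality actually needed for Lemma~\ref{lem:comparison}. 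Either proof is acceptable.
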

\begin{proof}
 Observe that 
 \[
  \frac{A_0+A_1}{B_0+B_1} = \frac{A_0}{B_0} \cdot \frac{B_0}{B_0+B_1} + \frac{A_1}{B_1}  \cdot \frac{B_1}{B_0+B_1} = \theta  \cdot \frac{A_0}{B_0}+(1-\theta) \cdot \frac{A_1}{B_1},
 \] 
 where $\theta \in [0,1]$. The mediant $\frac{A_0+A_1}{B_0+B_1}$ therefore lies in between between $\frac{A_0}{B_0}$ and $\frac{A_1}{B_1}$.
   \end{proof}

We apply Lemma \ref{lem:mediant} to \eqref{eq:neode} with 
\begin{align*}
&A_0 = e^\mrhob \Big(f''(\rho) \d^2 + (2\rho f'(\rho) + f(\rho)) \d   + \rho^2 f(\rho) \Big) + \rho^3 G,\quad A_1 = 3\rho \d F,\\
&B_0 = e^\mrhob \rho f(\rho),\quad\text{and}\quad B_1=\rho^2F,
\end{align*}
and obtain the following bound on the trajectory
\begin{align}\label{eq:comparisondynamics}
  \d'(\rho) &\geq \min\left\{\frac{f''(\rho) \d^2 + (2\rho f'(\rho) + f(\rho)) \d +   \rho^2 f(\rho) +\frac{\rho^3G}{e^\mrhob}}{\rho f(\rho)},\frac{3\rho \d F}{\rho^2F}\right\}\nonumber\\
  &\geq \min\left\{\frac{f''(\rho) \d^2 + (2\rho f'(\rho) + f(\rho)) \d +   \rho^2 f(\rho) -C_\eta\rho^3}{\rho f(\rho)},\frac{3 \d}{\rho}\right\},
   \end{align}
where 
\[C_\eta \coloneqq \|G_0\|_{L^\infty}e^{mw(0)}.\]
Note that the bound \eqref{eq:Fpositive} on the non-negativity of $F$ is crucially used in order to apply the mediant inequality. When $F=0$, the inequality holds trivially.

Importantly, the right-hand side of \eqref{eq:comparisondynamics} depends solely on local information. Therefore, we will construct a new threshold function $\eta$ satisfying \eqref{eq:comparisondynamics} with equality:
\begin{equation}\label{eq:etaode}
 \eta'(\rho)=\min\left\{\frac{f''(\rho) \eta^2 + (2\rho f'(\rho) + f(\rho)) \eta +   \rho^2 f(\rho) -C_\eta\rho^3}{\rho f(\rho)},\frac{3 \eta}{\rho}\right\}=:\mathcal{G}(\rho,\eta),
\end{equation}
and with $\eta(0)=0$. The function $\eta$ will serve as an upper bound of $\d$ through a \emph{comparison principle}, see Lemma \ref{lem:comparison}.

Similar to $\sigma$, the threshold function $\eta$ satisfying \eqref{eq:etaode} and $\eta(0)=0$ is not uniquely defined, as $\mathcal{G}(0,0)$ is not well-defined. We argue that the behaviors of $\mathcal{F}$ and $\mathcal{G}$ are similar around $(0,0)$, allowing us to construct a unique function $\eta$ following a similar approach to the construction of $\sigma$ in \cite{hamori2023sharp}.
\begin{proposition}
 Let $\eps>0$ and $y\in C^1{([0,\eps])}$ with $y(0)=0$. Then,
 \begin{equation}\label{eq:FGequal}
\lim_{\rho\to0^+}\mathcal{G}(\rho,y(\rho)) = \lim_{\rho\to0^+}\mathcal{F}(\rho,y(\rho)). 	
 \end{equation}
\end{proposition}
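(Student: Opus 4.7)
The plan is to show that both $\mathcal{F}(\rho,y(\rho))$ and $\mathcal{G}(\rho,y(\rho))$ share the same leading-order behavior as $\rho\to 0^+$, and that under the sign conditions \eqref{eq:f0} the minimum defining $\mathcal{G}$ is attained by its first argument.

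First, I would set $a:=y'(0)$ and use $y(0)=0$ together with $y\in C^1([0,\varepsilon])$ to write $y(\rho)=a\rho+o(\rho)$ and hence $y(\rho)^2=a^2\rho^2+o(\rho^2)$. From \eqref{eq:f0}, $f(0)=0$ with $f'(0)>0$ and $f''(0)<0$, so $f(\rho)=f'(0)\rho+\tfrac12 f''(0)\rho^2+O(\rho^3)$ and $\rho f(\rho)=f'(0)\rho^2+O(\rho^3)$. Substituting these expansions into
\[
\mathcal{F}(\rho,y(\rho))=\frac{f''(\rho)y(\rho)^2+(2\rho f'(\rho)+f(\rho))y(\rho)+\rho^2 f(\rho)}{\rho f(\rho)},
\]
the numerator expands as $f''(0)a^2\rho^2+3f'(0)a\rho^2+o(\rho^2)$ (the term $\rho^2 f(\rho)=O(\rho^3)$ is negligible against $\rho f(\rho)\sim\rho^2$), while the denominator is $f'(0)\rho^2+o(\rho^2)$. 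Dividing and passing to the limit gives
\[
L:=\lim_{\rho\to 0^+}\mathcal{F}(\rho,y(\rho))=\frac{f''(0)}{f'(0)}\,a^2+3a.
\]

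Next I turn to $\mathcal{G}$. The first argument inside the $\min$ differs from $\mathcal{F}$ only by the addition of $-C_\eta \rho^3$ in the numerator, and $\rho^3/\rho f(\rho)=O(\rho)\to 0$. Hence the first argument has the same limit $L$. The second argument satisfies $\dfrac{3y(\rho)}{\rho}\to 3a$. Since $f'(0)>0$ and $f''(0)<0$, the coefficient $f''(0)/f'(0)$ is strictly negative, so $L=3a+\tfrac{f''(0)}{f'(0)}a^2\leq 3a$ for every $a\in\R$. Consequently, for all sufficiently small $\rho>0$, the first argument of the minimum is no larger than the second, so $\mathcal{G}(\rho,y(\rho))$ agrees with the first argument near $\rho=0^+$, and its limit is $L$ as well.

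Combining these two computations yields $\lim_{\rho\to 0^+}\mathcal{G}(\rho,y(\rho))=L=\lim_{\rho\to 0^+}\mathcal{F}(\rho,y(\rho))$, which is the claim. The only subtle point is the last one: one must verify that the inequality between the two candidate limits is \emph{strict} at $\rho=0$ or, failing that, uniform in a small neighborhood so that the $\min$ truly selects the first argument; since $\tfrac{f''(0)}{f'(0)}a^2\leq 0$, either the inequality is strict (so continuity forces it to hold on a right neighborhood of $0$) or $a=0$, in which case both arguments tend to $0$ and the conclusion is immediate. This sign-tracking is the main technical hinge; the remainder of the proof is just careful Taylor expansion.
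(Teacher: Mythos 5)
Your proof is correct and follows essentially the same route as the paper: expand $\mathcal{F}(\rho,y(\rho))$ near $\rho=0$ to get $L=\tfrac{f''(0)}{f'(0)}y'(0)^2+3y'(0)$, observe that the extra $-C_\eta\rho^3$ term in the first argument of the $\min$ is negligible, note $\tfrac{3y(\rho)}{\rho}\to 3y'(0)\ge L$, and conclude. The "subtle point" you flag at the end is handled in the paper more cleanly by simply passing the limit inside the $\min$ via continuity (so no case split on whether $y'(0)=0$ is needed), but your case-by-case argument reaches the same conclusion.
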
\label{prop:FGequal}
\begin{proof}
From the definitions of $\mathcal{F}$ and $\mathcal{G}$ in \eqref{eq:sigmaode} and \eqref{eq:etaode}, we have the relation:	
\begin{equation}\label{eq:FGrelation}
\mathcal{G}(\rho,y) = \min\left\{\mathcal{F}(\rho,y)-\frac{C_\eta\rho^2}{f(\rho)},\frac{3y}{\rho}\right\}.	
\end{equation}
In light of \eqref{eq:f0}, we compute the limits:
\[
\lim_{\rho\to0^+} \frac{C_\eta\rho^2}{f(\rho)} = \lim_{\rho\to0^+} \frac{2C_\eta\rho}{f'(\rho)} = 0,\quad \lim_{\rho\to0^+}\frac{3y(\rho)}{\rho}=3y'(0).
\]
Therefore, we have
\[\lim_{\rho\to0^+}\mathcal{G}(\rho,y(\rho)) = \min\left\{\lim_{\rho\to0^+}\mathcal{F}(\rho,y(\rho)),3y'(0)\right\}.\]
Furthermore, we observe
\begin{align*}
 \lim_{\rho\to0^+}\mathcal{F}(\rho,y(\rho))&=\lim_{\rho\to 0_+}\frac{f''(\rho) y(\rho)^2 + (2\rho f'(\rho) + f(\rho)) y(\rho) + \rho^2 f(\rho)}{\rho f(\rho)}\\
 &= \frac{f''(0)}{f'(0)} y'(0)^2 + 3y'(0)\leq 3y'(0).
\end{align*}
This directly implies \eqref{eq:FGequal}, thanks to \eqref{eq:f0}.
\end{proof}

We apply Proposition \ref{prop:FGequal} to \eqref{eq:etaode} and obtain
\[\eta'(0)=\lim_{\rho\to0^+}\mathcal{G}(\rho,\eta(\rho))=\frac{f''(0)}{f'(0)} \eta'(0)^2 + 3\eta'(0).\]
Thus we have either $\eta'(0)=0$ or $\eta'(0)=\sigma'(0)=\frac{-2f'(0)}{f''(0)}>0$. 

Among all the candidates, we shall select the function $\eta$ such that $\eta'(0)=\sigma'(0)>0$. Moreover, we claim that such a choice of $\eta$ is unique.

\begin{proposition} \label{prop:eta}
There exists a unique trajectory $\eta$ that satisfies
\begin{equation}\label{eq:etadyn}
  \begin{cases}
  	\eta'(\rho) = \mathcal{G}(\rho,\eta(\rho)),\\
  	\eta(0) = 0,\quad  \eta'(0) = \sigma'(0)>0.
  \end{cases}
\end{equation}
\end{proposition}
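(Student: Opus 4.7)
The plan is to adapt the analysis of $\sigma$ from \cite{hamori2023sharp}, dealing with the new complication introduced by the minimum in $\mathcal{G}$. The key observation is that near $\rho=0$ the first branch of $\mathcal{G}$ is strictly active, so the ODE reduces to a regular perturbation of the one defining $\sigma$. Indeed, if $\eta\in C^1$ satisfies \eqref{eq:etadyn}, then since $C_\eta\rho^2/f(\rho)\to 0$, the computation in Proposition~\ref{prop:FGequal} (combined with $\eta(\rho)/\rho\to \sigma'(0)$) yields
\[
\lim_{\rho\to 0^+}\Big[\mathcal{F}(\rho,\eta(\rho))-\tfrac{C_\eta\rho^2}{f(\rho)}\Big]=\sigma'(0),\qquad \lim_{\rho\to 0^+}\frac{3\eta(\rho)}{\rho}=3\sigma'(0),
\]
and since $\sigma'(0)<3\sigma'(0)$, by continuity the first branch is strictly smaller on some interval $(0,\epsilon)$, where the equation collapses to $\eta'(\rho)=\mathcal{F}(\rho,\eta(\rho))-C_\eta\rho^2/f(\rho)$.

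Next, I would regularize via the substitution $\eta(\rho)=\rho\,z(\rho)$. Using that $\rho/f(\rho)$ extends smoothly across $0$ with value $1/f'(0)$, a direct computation recasts the reduced ODE as $\rho\,z'(\rho)=H(\rho,z(\rho))$, where $H$ is smooth near $(0,\sigma'(0))$, with $H(0,\sigma'(0))=0$ and $\partial_z H(0,\sigma'(0))=-2$. The linearized homogeneous equation $\rho\xi'=-2\xi$ has general solution $\xi\propto\rho^{-2}$, which is incompatible with $z$ being bounded at $0$; a standard Briot--Bouquet / contraction-mapping argument on the Banach space $\{z\in C([0,\epsilon]):|z-\sigma'(0)|\leq\delta\}$ then produces a unique $C^1$ solution with $z(0)=\sigma'(0)$, and hence a unique $\eta$ with the prescribed behaviour on $[0,\epsilon]$. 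For $\rho$ bounded away from $0$, both branches of $\mathcal{G}$ are locally Lipschitz in $\eta$, so their minimum is too, and the Cauchy--Lipschitz theorem extends $\eta$ uniquely until possible blow-up at some $\rho_*\leq\rho_c$, consistent with Figure~\ref{fig:eta}.

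The main obstacle is precisely uniqueness through the singular point $\rho=0$; everything else is routine. The calculation $\partial_z H(0,\sigma'(0))=-2<0$ is what ultimately selects the trajectory satisfying $\eta'(0)=\sigma'(0)$ over the spurious branch with $\eta'(0)=0$ coming from the other root of the quadratic $\frac{f''(0)}{f'(0)}z^2+2z=0$ identified implicitly in Proposition~\ref{prop:FGequal}.
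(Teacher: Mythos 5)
Your proposal is correct and follows essentially the same route the paper indicates: reduce to the singular ODE for $\sigma$ near $\rho=0$ (which the paper defers to \cite[Theorem~3.1]{hamori2023sharp}), using Proposition~\ref{prop:FGequal} to show that near the origin the first branch of $\mathcal{G}$ is strictly active, and then Cauchy--Lipschitz away from $\rho=0$. Your key computations — that $\sigma'(0)<3\sigma'(0)$ forces the $\mathcal{F}$-branch to be strictly smaller on a neighborhood of $0$, and that $\partial_z H(0,\sigma'(0))=-2<0$ makes the Briot--Bouquet argument select the nontrivial root $\eta'(0)=\sigma'(0)$ uniquely — are accurate and make explicit exactly the parts the paper leaves to the cited reference.
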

The proof of Proposition \ref{prop:eta} closely parallels the proof of the well-posedness of $\sigma$ as stated in Theorem \ref{thm:sigma}. A detailed proof can be found in \cite[Theorem 3.1]{hamori2023sharp}. Extending the proof to $\eta$ is straightforward, utilizing Proposition \ref{prop:FGequal} to handle the local existence part. We omit the details of the proof for simplicity.

We now offer comments on the shape of the threshold function $\eta$. First of all, since $\eta(0)=0$ and $\eta'(0)>0$, we know that $\eta(\rho)$ is positive for small $\rho$, namely there exist a number $\rho_\eta>0$ such that
\[\eta(\rho)>0, \quad \rho\in (0,\rho_\eta).\]

Next, we observe from \eqref{eq:FGrelation} that $\mathcal{G}(\rho,y)\leq\mathcal{F}(\rho,y)$. This implies $\eta'(\rho)\leq\sigma'(\rho)$, and consequently
\[\eta(\rho)\leq\sigma(\rho),\quad\forall~\rho\in[0,\rhoc].\]
Since $\sigma$ is bounded on $[0,\rhoc]$, so is $\eta$, namely there exists a constant $M$ such that
\begin{equation}\label{eq:etaM}
\eta(\rho)\leq M<\infty,\quad\forall~\rho\in[0,\rhoc].	
\end{equation}

Finally, it's worth noting that while it's established in \cite[Proposition 3.2]{hamori2023sharp} that  $\sigma\geq0$, the same doesn't hold true for $\eta$. In fact, not only can $\eta$ become negative,  it may also blow up to $-\infty$. Namely, either $\eta$ is lower bounded, or there exists a number $\rho_*\in(0,\rhoc]$ such that 
\[\lim_{\rho\to \rho_*^-}\eta(\rho) = -\infty.\]

%
\subsection{Global behavior of solutions}
With the threshold function $\eta$ in hand, we are ready to explore the global behavior of the coupled dynamics \eqref{eq:coupled}. We state the following global well-posedness theorem for subcritical initial data.
\begin{theorem}\label{thm:subcritical}
  Consider the coupled system \eqref{eq:coupled} with subcritical initial data 
  \begin{equation}\label{eq:subinit}
    (\rho_0,d_0)\in\Sigma:=\big\{(\rho,d):0\leq\rho\leq\rho_c,\,d\leq\eta(\rho)\big\}. 	
  \end{equation}
Then the solution $(\rho,d)$ exists in all time and
       $$\lim_{t\to \infty} \rho(t) = 0, \quad \lim_{t \to \infty} d(t) = 0. $$
\end{theorem}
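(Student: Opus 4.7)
The plan is to combine the mediant-based comparison principle with the dissipative structure of \eqref{eq:coupled} to obtain both global existence and decay of the trajectory. A crucial first observation is that $\dot\rho = -e^\mrhob \rho f(\rho) - \rho^2 F \leq 0$, so $\rho$ is monotonically non-increasing along any characteristic; this lets us view $\rho$ as the effective independent variable and reduces the task to bounding $d$ as $\rho$ descends from $\rho_0$ toward $0$.

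The central step is to establish $d(t) \leq \eta(\rho(t))$ as long as $\rho(t) > 0$. Setting $\phi(t) := \eta(\rho(t)) - d(t)$, the inequality \eqref{eq:comparisondynamics} together with $\dot d = \d'(\rho)\dot\rho$ and $\dot\rho \leq 0$ gives $\dot d \leq \mathcal{G}(\rho, d)\,\dot\rho$; combining with $\eta'(\rho)\dot\rho = \mathcal{G}(\rho, \eta)\dot\rho$ yields
\[ \dot\phi \geq \dot\rho\bigl(\mathcal{G}(\rho, \eta) - \mathcal{G}(\rho, d)\bigr) \geq -L\,|\dot\rho|\,\phi\quad\text{whenever }\phi \geq 0, \]
where $L$ is the local Lipschitz constant of $\mathcal{G}(\rho,\cdot)$, finite because $\mathcal{G}$ is the minimum of two smooth functions evaluated on a compact range of $(\rho,d)$. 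Since $\int_0^t |\dot\rho|\,ds \leq \rho_0 \leq \rhoc$, a Gr\"onwall-type estimate combined with a standard first-crossing argument precludes $\phi$ from becoming negative, delivering $d(t) \leq \eta(\rho(t)) \leq M$ via \eqref{eq:etaM}. A matching lower bound $d(t) \geq -M'$ comes from inspecting $\dot d$ in \eqref{eq:coupled}: when $\rho \in (0, \rhoc)$ the restoring quadratic term $-e^\mrhob f''(\rho)d^2 > 0$, together with the linear-in-$d$ contribution whose sign becomes favorable for very negative $d$, dominates all remaining terms, which are uniformly bounded via \eqref{eq:FGbound} and \eqref{eq:nonlocalbound}, forcing $\dot d > 0$ whenever $d$ is sufficiently negative.

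With $(\rho, d)$ confined to the compact rectangle $[0, \rhoc] \times [-M', M]$ on which the vector field of \eqref{eq:coupled} is smooth, classical ODE theory yields global existence. For the long-time behavior, $\dot\rho \leq -e^{-mw(0)} \rho f(\rho)$ combined with $f(\rho) \sim f'(0)\rho$ near $0$ gives $\dot\rho \lesssim -\rho^2$, forcing $\rho(t) \to 0$. The upper comparison $d(t) \leq \eta(\rho(t)) \to \eta(0) = 0$ yields $\limsup_{t\to\infty} d(t) \leq 0$, and the matching $\liminf_{t\to\infty} d(t) \geq 0$ follows because, once $\rho$ is small, every term in $\dot d$ apart from the quadratic barrier is $O(\rho)$, so the barrier drives $d$ back toward $0$.

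The principal obstacle is the comparison step. The mediant inequality is non-strict: at a hypothetical touching point $\rho_*$ one has $\d'(\rho_*) \geq \mathcal{G}(\rho_*, \eta(\rho_*)) = \eta'(\rho_*)$ with possible equality, so naive first-crossing reasoning is inconclusive. The Gr\"onwall-type estimate above sidesteps this by controlling the gap $\phi$ along the entire characteristic rather than its pointwise derivative; making this rigorous in the presence of the nonautonomous, nonlocal dynamics is precisely the content of the forthcoming Lemma~\ref{lem:comparison}.
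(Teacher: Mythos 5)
Your proposal is correct and follows essentially the same route as the paper: monotone decay of $\rho$, the mediant-inequality bound \eqref{eq:comparisondynamics}, a Gr\"onwall-type comparison against $\eta$ to obtain the upper bound $d\leq\eta(\rho)\leq M$, a quadratic-barrier argument for the lower bound, and asymptotic decay via $\dot\rho\lesssim-\rho^2$ plus the comparison principle and the vanishing of the barrier roots as $\rho\to 0$. Two small imprecisions worth noting: your claim that the Lipschitz constant of $\mathcal{G}(\rho,\cdot)$ is finite "because $\mathcal{G}$ is the minimum of two smooth functions on a compact range" is not literally true---both branches of $\mathcal{G}$ carry a $1/\rho$ singularity near $\rho=0$, which is why the paper's Lemma~\ref{lem:comparison} localizes the Gr\"onwall estimate on $[\rho_*-\eps,\rho_*]$ bounded away from the origin (your time-domain version still goes through for any finite $t$ because $\rho(t)>0$, but the reasoning needs to be stated this way); and your lower-bound discussion only treats $\rho\in(0,\rhoc)$, omitting the boundary case $\rho_0=\rhoc$ where $f''$ degenerates, which the paper handles by a short-time argument pushing $\rho$ strictly below $\rhoc$ before invoking the barrier.
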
 

Theorem \ref{thm:subcritical} is a non-trivial extension of Theorem \ref{thm:sigma}. The main challenges lie in dealing with the dynamics described by \eqref{eq:neode}, which involve nonlocal quantities such as $F$, $G$, and $e^\mrhob$. These quantities evolve over time, and we only have certain a priori bounds on them. To address this challenge, we establish a comparison principle that links the trajectories governed by \eqref{eq:neode} with the threshold function $\eta$.\begin{lemma}[Comparison principle]\label{lem:comparison}
 Let $\d$ and $\eta$ be two functions defined in $[0,\rho_0]$ that satisfy \eqref{eq:comparisondynamics} and \eqref{eq:etaode}, respectively. Suppose 
 \[\d(\rho_0)\leq\eta(\rho_0).\]
 Then, we have
 \begin{equation}\label{eq:comparison}
  \d(\rho)\leq\eta(\rho),\quad \forall~\rho\in[0,\rho_0]. 	
 \end{equation}
\end{lemma}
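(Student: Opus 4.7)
The plan is to argue by contradiction, exploiting the Lipschitz continuity of $\mathcal{G}(\rho,\cdot)$ away from the singular point $\rho=0$ together with a Gronwall-type estimate. Suppose \eqref{eq:comparison} fails, so the set $B=\{\rho\in[0,\rho_0]:\d(\rho)>\eta(\rho)\}$ is non-empty. Since $\d-\eta$ is continuous and $\d(\rho_0)\leq\eta(\rho_0)$, $B$ is relatively open in $[0,\rho_0]$ and does not contain $\rho_0$. I would select a connected component $(\alpha,\beta)$ of $B$ and fix any $\rho_1\in(\alpha,\beta)$. By continuity of $\d-\eta$, one has $\d(\beta)=\eta(\beta)$, while $\d(\rho)>\eta(\rho)$ on $[\rho_1,\beta)$.

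Next, I would extract a local Lipschitz bound on $\mathcal{G}$. The first branch of $\mathcal{G}$ in \eqref{eq:etaode} is a polynomial in $y$ divided by $\rho f(\rho)$, and the second branch is linear in $y$; both are smooth in $y$ whenever $\rho>0$, and the minimum of two Lipschitz functions is Lipschitz with the larger of the two constants. Restricting to $\rho\in[\rho_1,\beta]$ (bounded away from $0$) and to $y$ in a compact interval containing the values of $\d$ and $\eta$ on this interval, I obtain a uniform constant $L>0$ with
\[
\mathcal{G}(\rho,y_1)-\mathcal{G}(\rho,y_2)\geq -L|y_1-y_2|,\qquad \rho\in[\rho_1,\beta].
\]

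Writing $h(\rho)=\d(\rho)-\eta(\rho)\geq 0$ on $[\rho_1,\beta]$, subtracting the equation for $\eta$ from the inequality \eqref{eq:comparisondynamics} for $\d$ and invoking the Lipschitz bound yields $h'(\rho)\geq -Lh(\rho)$ there. Consequently $h(\rho)e^{L\rho}$ is non-decreasing on $[\rho_1,\beta]$, so
\[
0 = h(\beta)e^{L\beta} \geq h(\rho_1)e^{L\rho_1} > 0,
\]
which is the desired contradiction. Hence $B$ must be empty, proving \eqref{eq:comparison}.

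The main technical obstacle I anticipate is that $\mathcal{G}$ is defined as a minimum and therefore fails to be smooth, ruling out a direct linearization argument at a crossing point where $\d(\rho_2)=\eta(\rho_2)$; the one-sided Lipschitz continuity of $\mathcal{G}(\rho,\cdot)$ is nonetheless enough to run the Gronwall step above. A secondary issue is the singularity of $\mathcal{G}$ at $\rho=0$, which is avoided by restricting attention to $[\rho_1,\beta]$ with $\rho_1>0$; the case that $B$ touches only $\rho=0$ cannot arise because $B$ is relatively open, so a violation at $\rho=0$ forces a whole interval $[0,\eps)$ of violations and thus provides an admissible $\rho_1>0$.
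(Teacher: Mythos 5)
Your proposal is correct and follows essentially the same approach as the paper: both argue by contradiction from a first crossing point, use the (one-sided) Lipschitz continuity of $\mathcal{G}(\rho,\cdot)$ on a compact interval bounded away from $\rho=0$, and close with a Gr\"onwall-type estimate to force $\d-\eta\leq 0$ at the left endpoint, contradicting the assumed violation. Your write-up is a bit more explicit about the connected-component structure of the violation set and the handling of the singularity at $\rho=0$, but the mechanism is identical.
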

\begin{proof}
We argue by contradiction. Suppose \eqref{eq:comparison} is false. Then there must exist a $\rho_*\in(0,\rho_0]$ and a small $\eps\in(0,\rho_*)$ such that
\[\d(\rho_*)=\eta(\rho_*),\quad\text{and}\quad \d(\rho)>\eta(\rho)\quad\forall~\rho\in[\rho_*-\eps,\rho_*).\]
Note that $\rho_*$ is the largest $\rho$ when the comparison starts to fail.

On the other hand, leveraging the mediant inequality, from \eqref{eq:comparisondynamics} and \eqref{eq:etaode}, we know $d'(\rho_*)\geq\eta'(\rho_*)$. It is easy to check  $\mathcal{G}$ is uniformly Lipschitz in $\eta$ for $\rho\in [\rho_*-\eps,\rho_*]$, we have
\[\d'(\rho)-\eta'(\rho)\geq \mathcal{G}(\rho,\d(\rho)) - \mathcal{G}(\rho,\eta(\rho))\geq-\mathcal{G}_{Lip} (\d(\rho)-\eta(\rho)), \quad\forall~\rho\in[\rho_*-\eps,\rho_*],\]
where we denote $\mathcal{G}_{Lip}$ the Lipschitz constant. Integration along $[\rho_*-\eps,\rho_*]$ yields
\[\d(\rho_*-\eps)-\eta(\rho_*-\eps)\leq (\d(\rho_*)-\eta(\rho_*))e^{\mathcal{G}_{Lip}
\eps}=0.\]
This leads to a contradiction. 
\end{proof}

Now we are ready to prove Theorem \ref{thm:subcritical}.   
\begin{proof}[Proof of Theorem \ref{thm:subcritical}]
First, we establish the bounds on $\rho$. From the $\rho$-equation in \eqref{eq:coupled} and the bound \eqref{eq:Fpositive}, we get $\dot\rho\leq 0$ and hence $\rho(t)\leq\rho_0$. On the other hand, $\rho=0$ is a stationary state. Therefore, we have the bounds
\begin{equation}\label{eq:rhobound}
0\leq\rho(t)\leq\rho_0,\quad\forall~t\geq0.	
\end{equation}
Furthermore, we have the decay estimate:
\begin{equation}\label{eq:rhodecay}
\dot\rho\leq -e^\mrhob\rho^2U(\rho)-0 \leq -e^{-mw(0)}U(\rho_0)\rho^2.	
\end{equation}
Thus we deduce that $\rho(t)\to0$ as $t\to\infty$.
   
Next, we establish the bounds on $d$.
The upper bound can be obtained using the comparison principle \eqref{eq:comparison} and the bound in \eqref{eq:etaM}:
\[d(t) = \d(\rho(t))\leq \eta(\rho(t))\leq M,\quad \forall~t\geq0.\]
Note that we have used \eqref{eq:rhobound} in order to apply the comparison principle.

For the lower bound on $d$, we proceed with a similar argument as \cite[Theorem 4.1(c)]{hamori2023sharp}, and use a priori bounds \eqref{eq:FGbound}, \eqref{eq:Fpositive} and \eqref{eq:nonlocalbound} on the nonlocal terms. From the $d$-equation in \eqref{eq:coupled}, when $\rho<\rhoc$ and $d\leq0$ we have
\begin{align}\label{eq:dlowerbound}
   \dot d &= -e^\mrhob \Big(f''(\rho) d^2 + (2\rho f'(\rho) + f(\rho)) d   + \rho^2 f(\rho) + 3e^{\rhob} \rho d F + e^{\rhob}\rho^3 G \Big)\nonumber\\
   &\geq -e^\mrhob \Big(f''(\rho) d^2 + (2\rho f'(\rho) + f(\rho)) d   + \rho^2 f(\rho) + e^{mw(0)}\|G_0\|_{L^\infty}\rho^3 \Big)\nonumber\\
   &= -e^\mrhob f''(\rho) \big(d-\d_-(\rho)\big)\big(d-\d_+(\rho)\big),
\end{align}
where
\[
  \d_{\pm}(\rho) = \frac{2\rho f'(\rho) + f(\rho)\pm\sqrt{(2\rho f'(\rho) + f(\rho))^2-4f''(\rho)(\rho^2 f(\rho) + e^{mw(0)}\|G_0\|_{L^\infty}\rho^3)}}{-2f''(\rho)}.
\]
Here, we recall the assumption \eqref{eq:fluxcc} on $f$: $f''(\rho)<0$ for $\rho<\rhoc$.

For the case $\rho_0<\rhoc$, observe that $\d_\pm$ has a uniform lower bound, namely there exists a finite number $\d_-$, depending on $\rho_0$, such that
\[\d_\pm(\rho)\geq\d_->-\infty,\quad \forall~\rho\in[0,\rho_0].\]
Owing to \eqref{eq:rhobound}, we obtain that $\dot d(t)\geq0$ if $d(t)\leq\d_-$. We therefore have the lower bound
\[ d(t)\geq \min\{d_0,\underbar d\},\quad\forall~t\geq0.\]

For the case $\rho_0=\rhoc$, since the dynamics \eqref{eq:coupled} is locally well-posed, there exists a time $t_1>0$ such that the solution $(\rho(t),d(t))$ exists and is bounded for $t\in[0,t_1]$. From \eqref{eq:rhodecay} we know $\rho(t_1)<\rhoc$. Therefore, the dynamics starting from $t=t_1$ reduces to previous case, resulting a uniform-in-time lower bound.

For the asymptotic behavior of $d$, we apply the comparison principle \eqref{eq:comparison} and get:
\[\limsup_{t\to\infty}d(t)\leq\lim_{t\to\infty}\eta(\rho(t))=\eta(0)=0.\]
On the other hand, we observe that
\[\lim_{\rho\to0^+}\d_+(\rho)=\lim_{\rho\to0^+}\d_-(\rho)=0.\]
We argue from \eqref{eq:dlowerbound} that 
\[\liminf_{t\to\infty}d(t)\geq0.\]
Indeed, if $\liminf_{t\to\infty}d(t)=d_*<0$, we can find a time $t_*$ such that $\rho(t_*)$ is small enough so that $\d_-(\rho_*)>d_*/2$. Consequently, we have
\[\d_-(\rho(t))>\frac{d_*}{2},\quad\forall~t\geq t_*.\]
From \eqref{eq:dlowerbound}, we see that $d'(t)>0$ for any $t\geq t_*$, as long as $d(t)< d_*/2$. Hence, $\liminf_{t\to\infty}d(t)\geq d_*/2$. This leads to a contradiction.

We conclude with $d(t)\to0$ as $t\to\infty$.
\end{proof}

Finally, we apply Theorem \ref{thm:subcritical} along all characteristic paths to obtain a uniform bound on $\|\pa_x\rho\|_{L^\infty}$, which lead to the global well-posedness result in Theorem \ref{thm:main}.

\begin{proof}[Proof of Theorem \ref{thm:main}]
  For any $x\in\R$, from the subcritical initial condition \eqref{eq:subcritical}, we know $(\rho_0(x),\rho'_0(x))\in\Sigma$, where $\Sigma$ is defined in \eqref{eq:subinit}. Apply Theorem \ref{thm:subcritical}, we deduce that $\rho(t,X(t,x))$ and $\pa_x\rho(t,X(t,x))$ are bounded in all time, and the bounds are independent of the choice of $x$. Hence, there exists a constant $D>0$ such that 
  \[\|\pa_x\rho(t,\cdot)\|_{L^\infty}\leq D,\quad \forall~t\geq0.\]
  Consequently, the regularity criterion \eqref{eq:condition} holds for any finite time $T$. We apply Theorem \ref{thm:lwp} and conclude with the global regularity of the solution $(\rho,u)$.
\end{proof}

\appendix

\section{Energy estimates}
In this section, we prove Theorem \ref{thm:energy} by employing $\dot{H}^k$ energy estimates to the system:
\begin{equation}\label{eq:equiv}
  \begin{cases}
   \pa_tu+(u + U'(\rho)e^\mrhob \rho) \pa_x u = -U(\rho)e^\mrhob \displaystyle\int_0^\infty w'(z) \rho(x+z)(u(x+z)- u(x)) dz,\\    
  \pa_t\psi + u\pa_x\psi = 0,
  \end{cases}
\end{equation}

We begin by stating the following lemmas that will be utilized in the estimates. For the proof of the lemmas, we refer to e.g. \cite{kato1988commutator,li2019kato,lee2022sharp}.
\begin{lemma}[Fractional Leibniz rule]\label{lem:Leibniz}
Let $k\geq0,\,\, g,h \in L^\infty\cap \dot H^k(\R)$. There exists a constant $C>0$, depending only on $k$, such that
$$\|gh\|_{\dot H^k} \leq C\big( \|g\|_{L^\infty} \|h\|_{ \dot H^k} +
\|g\|_{\dot H^k} \|h\|_{L^\infty }\big) .$$
\end{lemma}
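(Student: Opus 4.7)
The plan is to prove this by a frequency-space decomposition, separating low--high, high--low, and high--high interactions. The case $k=0$ is immediate from H\"older, $\|gh\|_{L^2}\le\|g\|_{L^\infty}\|h\|_{L^2}$, and for integer $k\ge 1$ one may alternatively apply the Leibniz rule to $\partial_x^k(gh)$, bound each $\partial_x^j g\,\partial_x^{k-j}h$ in $L^2$ by H\"older with exponents $p_j=2k/j$, $q_j=2k/(k-j)$, invoke Gagliardo--Nirenberg
\[
\|\partial_x^j g\|_{L^{p_j}}\lesssim \|g\|_{L^\infty}^{1-j/k}\|g\|_{\dot H^k}^{j/k},
\]
and close by Young's inequality. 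So I would focus the main work on non-integer $k>0$, where Bony's paraproduct calculus is the cleanest tool.

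First I would fix a dyadic partition of unity by choosing a smooth radial cutoff $\varphi$ supported in $\{1/2\le|\xi|\le 2\}$ with $\sum_{j\in\mathbb{Z}}\varphi(2^{-j}\xi)=1$ for $\xi\ne 0$, and define Littlewood--Paley blocks $\Delta_j$ (Fourier multiplier $\varphi(2^{-j}\xi)$) together with the partial sums $S_j=\sum_{j'\le j-3}\Delta_{j'}$. The essential properties are the Bernstein identity $\|\Lambda^k\Delta_j g\|_{L^2}\sim 2^{jk}\|\Delta_j g\|_{L^2}$, the uniform bound $\|S_j g\|_{L^\infty}\lesssim\|g\|_{L^\infty}$, and the characterization $\|g\|_{\dot H^k}^2\sim\sum_j 2^{2jk}\|\Delta_j g\|_{L^2}^2$. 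I would then apply Bony's decomposition
\[
gh = T_g h + T_h g + R(g,h),\qquad T_g h:=\sum_j S_{j-3}g\cdot\Delta_j h,\qquad R(g,h):=\sum_{|j-j'|\le 2}\Delta_j g\,\Delta_{j'} h.
\]

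For the two paraproducts, each summand $S_{j-3}g\cdot\Delta_j h$ is frequency-localized to an annulus of size $2^j$, so by $L^2$ almost-orthogonality and $\|S_{j-3}g\|_{L^\infty}\lesssim\|g\|_{L^\infty}$,
\[
\|\Lambda^k T_g h\|_{L^2}^2\lesssim \sum_j 2^{2jk}\|S_{j-3}g\|_{L^\infty}^2\|\Delta_j h\|_{L^2}^2\lesssim \|g\|_{L^\infty}^2\|h\|_{\dot H^k}^2,
\]
and a symmetric bound holds for $T_h g$, yielding $\|g\|_{\dot H^k}\|h\|_{L^\infty}$.

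The main obstacle is the remainder $R(g,h)$, because each $\Delta_j g\cdot\Delta_{j'} h$ with $|j-j'|\le 2$ is frequency-supported in a \emph{ball} of radius $\sim 2^j$ rather than an annulus, so the orthogonality used for the paraproducts is lost. I would repair this by reorganizing according to the output scale: only pairs with $j\gtrsim\ell$ contribute to $\Delta_\ell R(g,h)$, and H\"older gives
\[
2^{\ell k}\|\Delta_\ell R(g,h)\|_{L^2}\lesssim \|g\|_{L^\infty}\sum_{j\ge\ell-C}2^{(\ell-j)k}\cdot 2^{jk}\|\Delta_j h\|_{L^2}.
\]
This is a discrete convolution $a_\ell\lesssim\|g\|_{L^\infty}(c\ast b)_\ell$ with $b_j=2^{jk}\|\Delta_j h\|_{L^2}\in\ell^2$ and $c_m=2^{mk}\mathbf{1}_{m\le C}$. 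The sequence $c$ lies in $\ell^1$ precisely because $k>0$, so Young's inequality for sequences yields $\|\Lambda^k R(g,h)\|_{L^2}\lesssim\|g\|_{L^\infty}\|h\|_{\dot H^k}$; exchanging the roles of $g$ and $h$ yields the symmetric bound, and summing gives the stated inequality. The hypothesis $k\ge 0$ is used exactly here at the summability step, with the endpoint $k=0$ handled separately by H\"older as noted above.
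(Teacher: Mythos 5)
The paper does not actually prove Lemma \ref{lem:Leibniz}; it states it as a known result and refers the reader to the cited literature (Kato--Ponce type estimates and Littlewood--Paley product estimates), so there is no paper proof to compare against. Your paraproduct argument is a complete and correct self-contained proof of the homogeneous product estimate, and it is essentially the standard argument found in those references: Bony decomposition, almost-orthogonality for the two paraproducts, and a discrete Young inequality for the remainder, with the $k>0$ hypothesis entering exactly where you say it does (summability of the geometric tail) and $k=0$ caught separately by H\"older. The alternative route you sketch for integer $k$ via the classical Leibniz rule and Gagliardo--Nirenberg is also correct and is the more elementary option when one only needs integer regularity.

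One cosmetic remark: your conventions double-count the frequency gap. You define $S_j=\sum_{j'\le j-3}\Delta_{j'}$ and then also write $T_g h=\sum_j S_{j-3}g\cdot\Delta_j h$ with remainder over $|j-j'|\le 2$; with these definitions $T_gh+T_hg+R(g,h)$ misses the terms with $3\le|j-j'|\le 5$. Either define $S_j=\sum_{j'\le j}\Delta_{j'}$ and keep $S_{j-3}$ in the paraproducts, or keep your $S_j$ and use $S_j g\cdot\Delta_j h$. This does not affect any of the estimates, since both the almost-orthogonality and the remainder bound are insensitive to the width of the gap, but it is worth fixing for the identity $gh=T_gh+T_hg+R(g,h)$ to hold exactly.
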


\begin{lemma}[Commutator estimate]\label{lem:commutator}
Let $k\geq1,\,\, g \in L^\infty\cap \dot H^k(\R)$, and $h\in L^\infty\cap \dot H^{k-1}(\R)$. There exists a constant $C>0$, depending only on $k$, such that
$$\|[\Lambda^k,g]h\|_{L^2} \leq C\big( \|\partial_x g\|_{L^\infty} \|h\|_{ \dot H^{k-1}} + \|g\|_{\dot H^k} \|h\|_{L^\infty }\big) ,$$
where the commutator is denoted by $[\Lambda^k,g]h=\Lambda^k(gh)-g\Lambda^kh$.

\end{lemma}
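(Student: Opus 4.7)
The plan is to prove this Kato--Ponce type commutator estimate via Littlewood--Paley decomposition and paradifferential analysis. Let $\{\Delta_j\}_{j\ge -1}$ be a standard dyadic partition of unity in frequency, with low-frequency cutoffs $S_j=\sum_{j'<j}\Delta_{j'}$, so $f=\sum_j\Delta_j f$ for any tempered distribution. Decomposing both factors yields
\[
  [\Lambda^k,g]h \;=\; \sum_{j,j'}[\Lambda^k,\Delta_j g]\Delta_{j'}h,
\]
where $[\Lambda^k,\Delta_j g]$ denotes the commutator of $\Lambda^k$ with multiplication by $\Delta_j g$. Splitting the double sum according to the relative sizes of $j$ and $j'$ produces three contributions corresponding to Bony's paraproduct: the low-high piece ($j\le j'-2$), the high-low piece ($j\ge j'+2$), and the balanced piece ($|j-j'|\le 1$). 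By $\ell^2$ almost-orthogonality (Plancherel applied to the frequency-localized outputs), it suffices to estimate each piece separately in $L^2$.

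For the low-high piece, regroup the sum as $\sum_{j'}[\Lambda^k,S_{j'-2}g]\Delta_{j'}h$. On the Fourier side, the bilinear symbol of the commutator is $m(\xi,\eta)=|\xi|^k-|\eta|^k$, where $\eta$ is the frequency of $h$ and $\xi-\eta$ is the frequency of $g$; in the low-high regime $|\xi-\eta|\ll|\eta|$, so $|\xi|\sim|\eta|$, and the mean value theorem applied to $t\mapsto |\eta+t(\xi-\eta)|^k$ yields
\[
  |m(\xi,\eta)| \;\lesssim\; |\xi-\eta|\cdot|\eta|^{k-1}.
\]
The factor $|\xi-\eta|$ provides one derivative on $g$, while $|\eta|^{k-1}$ is absorbed as $\Lambda^{k-1}$ acting on $h$. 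A standard Coifman--Meyer bilinear multiplier bound (equivalently, a direct kernel estimate combined with the Lipschitz bound $|S_{j'-2}g(x)-S_{j'-2}g(y)|\le\|\partial_x g\|_{L^\infty}|x-y|$) then gives
\[
  \|[\Lambda^k,S_{j'-2}g]\Delta_{j'}h\|_{L^2} \;\lesssim\; \|\partial_x g\|_{L^\infty}\cdot 2^{j'(k-1)}\|\Delta_{j'}h\|_{L^2},
\]
and square-summing in $j'$ via Plancherel produces the contribution $\lesssim\|\partial_x g\|_{L^\infty}\|h\|_{\dot H^{k-1}}$.

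For the high-low piece, the derivative $\Lambda^k$ essentially falls on the high-frequency factor $\Delta_j g$, since the frequency $\sim 2^j$ of $\Delta_j g$ dominates the frequency $\lesssim 2^{j-2}$ of $S_{j-2}h$ in the product $\Delta_j g \cdot S_{j-2}h$. Bernstein's inequality gives $\|S_{j-2}h\|_{L^\infty}\lesssim\|h\|_{L^\infty}$, and the estimate $\|\Lambda^k(\Delta_j g\cdot S_{j-2}h)\|_{L^2}\lesssim 2^{jk}\|\Delta_j g\|_{L^2}\|h\|_{L^\infty}$ (with the subtracted term $\|\Delta_j g\cdot\Lambda^k S_{j-2}h\|_{L^2}$ of strictly smaller order $\sim 2^{j'k}$) yields, after square-summing in $j$, the contribution $\lesssim\|g\|_{\dot H^k}\|h\|_{L^\infty}$; the balanced piece is handled identically. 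The principal obstacle is the low-high estimate: a naive triangle inequality loses the factor $\|\partial_x g\|_{L^\infty}$ and produces only $\|g\|_{\dot H^k}\|h\|_{L^\infty}$, whereas the mean-value identity for $|\xi|^k-|\eta|^k$ restricted to comparable frequencies $|\xi|\sim|\eta|$ is exactly the mechanism that transfers one derivative from $h$ onto $g$, yielding the sharper bound claimed.
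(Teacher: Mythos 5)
The paper itself does not prove this lemma; it only cites the literature (Kato--Ponce 1988, D.~Li 2019, Lee et al.), so your argument has to stand on its own. Your treatment of the two paraproduct pieces is correct in outline: in the low-high regime the mean-value bound $\bigl||\xi|^k-|\eta|^k\bigr|\lesssim|\xi-\eta|\,|\eta|^{k-1}$ for $|\xi-\eta|\ll|\eta|$ does transfer one derivative onto $g$, and in the high-low regime the outputs of $[\Lambda^k,\Delta_j g]S_{j-2}h$ sit in annuli $\{|\xi|\sim 2^{j}\}$, so Plancherel/almost-orthogonality legitimately upgrades the blockwise bounds $2^{jk}\|\Delta_j g\|_{L^2}\|h\|_{L^\infty}$ to $\|g\|_{\dot H^k}\|h\|_{L^\infty}$.

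The genuine gap is the balanced piece, which you dismiss with ``handled identically.'' It is not: for $|j-j'|\le 1$ the output of $[\Lambda^k,\Delta_j g]\Delta_{j'}h$ is supported in a \emph{ball} $\{|\xi|\lesssim 2^{j}\}$, not an annulus, so the almost-orthogonality you invoke at the outset fails there --- the low-frequency outputs of different $j$-blocks overlap and can pile up coherently. Blockwise you only obtain bounds of the form $c_j\|g\|_{\dot H^k}\|h\|_{L^\infty}$ or $d_j\|\partial_x g\|_{L^\infty}\|h\|_{\dot H^{k-1}}$ with $(c_j),(d_j)\in\ell^2$, whereas a triangle-inequality summation of ball-supported pieces needs $\ell^1$. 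For the half carrying the outer $\Lambda^k$ one can repair this by decomposing the output into $\Delta_m$, exploiting the gain $2^{(m-j)k}$ (this is precisely where $k>0$ enters) and Young's inequality in $m-j$; but for the subtracted half $\sum_{|j-j'|\le1}\Delta_j g\,\Lambda^k\Delta_{j'}h$ there is no such gain, and no soft argument closes it: already for $k=1$ this term is essentially Calder\'on's first commutator $[H,g]\partial_x h$, whose $L^2$ bound by $\|\partial_x g\|_{L^\infty}\|h\|_{L^2}$ is a deep result requiring Coifman--Meyer/Calder\'on--Zygmund technology or a genuinely finer bilinear analysis, not Bernstein plus Plancherel. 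So the hardest part of the lemma is exactly the part your sketch skips (and, contrary to your closing remark, the principal obstacle is this diagonal piece, not the low-high one); as written the proof is incomplete, and the honest options are to carry out that analysis or, as the paper does, to cite the references.
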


\begin{lemma}[Composition estimate]\label{lem:composition}
Let $k>0$, $g\in L^{\infty}\cap\dot H^k(\R)$, and $h\in C^{\lceil k \rceil}(\text{Range}(g))$. Then, the composition $h \circ g \in L^{\infty}\cap \dot H^k(\R)$. Moreover, there exists a constant $C>0$, depending on $k$, $\|h\|_{C^{\lceil k \rceil}(\text{Range}(g))}$, and $\|g\|_{L^\infty}$, such that 
$$\|h\circ g\|_{\dot H^k} \leq C\|g\|_{\dot H^k}.$$
\end{lemma}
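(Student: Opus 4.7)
The plan is to split off the easy $L^\infty$ bound, then handle the $\dot H^k$ estimate by treating integer and fractional $k$ separately. Throughout, the role of $\|g\|_{L^\infty}$ is to confine the essential range of $g$ to a compact interval $I\subset\R$ on which $h\in C^{\lceil k\rceil}(I)$, so $h$ and its derivatives up to order $\lceil k\rceil$ admit uniform $L^\infty$ bounds in terms of $\|h\|_{C^{\lceil k\rceil}(\mathrm{Range}(g))}$. The $L^\infty$ bound on the composition is then immediate: $\|h\circ g\|_{L^\infty}\leq \|h\|_{C^0(I)}$.

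For the integer case $k=n\in\mathbb{N}$, I would invoke Faà di Bruno's formula
$$\partial_x^n(h\circ g)=\sum c_{\mathbf i}\,(h^{(m)}\!\circ g)\prod_{j=1}^m\partial_x^{i_j}g,$$
where $\mathbf i=(i_1,\dots,i_m)$ runs over multi-indices with $1\leq m\leq n$, $i_j\geq 1$, and $\sum_j i_j=n$. Each factor $h^{(m)}\circ g$ is bounded in $L^\infty$ by $\|h\|_{C^n(I)}$. For the product of derivatives, I apply the one-dimensional Gagliardo-Nirenberg inequality
$$\|\partial_x^i g\|_{L^{2n/i}}\leq C\,\|g\|_{L^\infty}^{1-i/n}\|g\|_{\dot H^n}^{i/n}$$
combined with Hölder's inequality at the exponents $\{2n/i_j\}_j$, whose reciprocals sum to $1/2$ precisely because $\sum_j i_j=n$. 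The product is then bounded by $C\,\|g\|_{L^\infty}^{m-1}\|g\|_{\dot H^n}$, and summing over $\mathbf i$ yields the desired estimate with the claimed constant.

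For non-integer $k>0$, the clean chain-rule identity is unavailable, so I would pass to a Littlewood-Paley description via $\|g\|_{\dot H^k}^2\sim\sum_j 2^{2jk}\|\Delta_j g\|_{L^2}^2$, with partial sum $S_j=\sum_{\ell<j}\Delta_\ell$. Following Bony's telescoping strategy,
$$h(g)-h(0)=\sum_j\bigl[h(S_{j+1}g)-h(S_j g)\bigr],$$
and Taylor-expanding each difference to order $\lceil k\rceil$ about $S_j g$ decomposes the sum into a principal paraproduct piece $T_{h'(g)}g$ and a remainder involving products $(\Delta_j g)^p$ with $p\leq \lceil k\rceil$, each weighted by frequency-localized evaluations $h^{(p)}(S_j g)$ whose $L^\infty$ norms are controlled by $\|h\|_{C^{\lceil k\rceil}(I)}$ after noting $\|S_j g\|_{L^\infty}\lesssim\|g\|_{L^\infty}$. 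Bernstein's inequality converts $L^2$ bounds on $\Delta_j g$ into $L^p$ bounds at polynomial cost in $2^j$, and a Schur-type summation against the dyadic weights $2^{2jk}$ recovers $\|h(g)\|_{\dot H^k}\lesssim \|g\|_{\dot H^k}$.

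The main obstacle is the fractional case. Faà di Bruno gives no bookkeeping on how derivatives distribute across frequencies, and one must instead rely on the almost-orthogonality of dyadic blocks to obtain the correct scaling. The subtle point is that when $k-\lfloor k\rfloor$ is small, the top derivative $h^{(\lceil k\rceil)}$ is only continuous on $I$ rather than Hölder, so the Taylor remainder has to be controlled by a modulus-of-continuity estimate (equivalently, through a Besov embedding that trades a little integrability for Hölder regularity). Verifying that the resulting dyadic sum converges with a constant depending only on $k$, $\|h\|_{C^{\lceil k\rceil}(\mathrm{Range}(g))}$, and $\|g\|_{L^\infty}$, is the technical heart of the argument.
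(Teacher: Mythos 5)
The paper does not prove this lemma at all: it is quoted as a known Moser-type composition estimate with a pointer to the literature (\cite{kato1988commutator,li2019kato,lee2022sharp}), so there is no in-paper argument to match; your proposal is a reconstruction of the standard proof. Your integer case is complete and correct: Fa\`a di Bruno, the $L^\infty$ bound on $h^{(m)}\circ g$ over the (bounded) range of $g$, the Gagliardo--Nirenberg interpolation $\|\pa_x^i g\|_{L^{2n/i}}\lesssim \|g\|_{L^\infty}^{1-i/n}\|g\|_{\dot H^n}^{i/n}$, and H\"older with exponents $2n/i_j$ (reciprocals summing to $1/2$) give exactly $\|h\circ g\|_{\dot H^n}\leq C(\|h\|_{C^n},\|g\|_{L^\infty})\|g\|_{\dot H^n}$, with a constant of the claimed form. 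Note this is the only place where evaluation of $h$ occurs at points $g(x)$, so restricting $h$ to $\mathrm{Range}(g)$ causes no trouble there.

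The fractional case, however, is an outline rather than a proof, and it has one concrete gap beyond the summation details you defer: in the telescoping $h(g)-h(0)=\sum_j\bigl[h(S_{j+1}g)-h(S_jg)\bigr]$ you evaluate $h$ (and $h^{(p)}$) at $S_jg$, whose values need not lie in $\mathrm{Range}(g)$ --- only in a possibly larger interval of size $\lesssim\|g\|_{L^\infty}$ --- while $h$ is assumed $C^{\lceil k\rceil}$ only on $\mathrm{Range}(g)$. You must first extend $h$ to a $C^{\lceil k\rceil}$ function on a neighborhood of $[-C\|g\|_{L^\infty},C\|g\|_{L^\infty}]$ with comparable norm (a Whitney-type extension, or an elementary Taylor-polynomial extension off the closed convex hull of the range); in the paper's applications ($h=e^{-x}$, $h=U^{-1}$) such extensions exist but this step is part of the argument and should be stated. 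Separately, your stated obstacle that $h^{(\lceil k\rceil)}$ is ``only continuous'' when $k-\lfloor k\rfloor$ is small is a red herring: for non-integer $k$ one has $\lceil k\rceil=\lfloor k\rfloor+1>k$, so boundedness of $h^{(\lceil k\rceil)}$ already makes $h^{(\lfloor k\rfloor)}$ Lipschitz, which is strictly more than the H\"older-$(k-\lfloor k\rfloor)$ control the Taylor remainder requires; the real technical work is the Schur summation against the dyadic weights $2^{2jk}$ and the treatment of the low-frequency block, which you assert but do not carry out. (For $0<k<1$ there is also a much shorter route via the Gagliardo double-integral characterization of $\dot H^k$ and the Lipschitz bound on $h$, which avoids Littlewood--Paley entirely.) As it stands, the proposal proves the lemma for integer $k$ and gives a plausible but incomplete sketch for fractional $k$, which is the regime the paper actually allows since $k>\tfrac32$ need not be an integer.
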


We now proceed to analyze the evolution of the $\dot H^k$ energy of $u$ and $\psi$. 

We begin by acting $\Lambda^k$ on the $u$-equation of \eqref{eq:equiv} and integrating against $\Lambda^k u$:
\begin{align*}
\frac{1}{2}\frac{d}{dt} \|u\|_{\dot H^k}^2 =& -\int \Lambda^k u \cdot \Lambda^k\bigg( (u+U'(\rho)e^\mrhob\rho)\pa_x u\bigg) \,dx\\
&- \int_\R \Lambda^k \big(u(x)\big) \cdot \Lambda^k  \left( -U(\rho)e^\mrhob \int_0^\infty w'(z) \rho(x+z)(u(x+z)- u(x)) dz \right) dx \\
=:&\,\,{\rm I}+{\rm II}.
\end{align*}
For the first term ${\rm I}$, we further split it into two parts.
\[
{\rm I}= - \int (u+U'(\rho)e^\mrhob\rho) \cdot \Lambda^k u \cdot \Lambda^k \pa_x u\,dx - \int \Lambda^k u \cdot [\Lambda^k,(u+U'(\rho)e^\mrhob\rho)]\pa_x u \,dx
=:{\rm I}_1+{\rm I}_2.
\]
The term ${\rm I}_1$ contains a dangerous part $\Lambda^k \pa_x u$, where we apply integration by parts and \eqref{eq:nonlocalpax} to obtain:
\begin{align}
{\rm I}_1 &= - \int (u+U'(\rho)e^\mrhob\rho) \cdot \Lambda^k u \cdot \Lambda^k \pa_x u\,dx= -\int  (u+U'(\rho)e^\mrhob\rho) \cdot \Lambda^k u \cdot \pa_x\big(\Lambda^k u\big)\,dx\nonumber\\
&= -\int  (u+U'(\rho)e^\mrhob\rho)  \cdot \pa_x\left(\frac{(\Lambda^k u)^2}{2}\right)\,dx= \int  \pa_x(u+U'(\rho)e^\mrhob\rho)   \cdot \left(\frac{(\Lambda^k u)^2}{2}\right)\,dx\nonumber\\
&\leq  C\big(1+\|\pa_x \rho\|_{L^\infty}+\|\pa_x u\|_{L^\infty} \big) \|u\|_{\dot H^k}^2,\label{eq:I1}
\end{align}
where
$$C = C\Big( \|U\|_{C^2([0,\rho_m])}, w(0) \Big).$$
For the remaining commutator ${\rm I}_2$, we apply H\"older's inequality and Lemma \ref{lem:commutator} and get:
\begin{align*}
{\rm I}_2 &=  - \int \Lambda^k u \cdot [\Lambda^k,(u+U'(\rho)e^\mrhob\rho)]\pa_x u \,dx \leq \|u\|_{\dot H^k}\|[\Lambda^k,(u+U'(\rho)e^\mrhob\rho)]\pa_x u \|_{L^2}\\
&\leq C\|u\|_{\dot H^k} \Big( \|\partial_x (u+U'(\rho)e^\mrhob\rho)\|_{L^\infty} \|\pa_x u\|_{ \dot H^{k-1}} + \|(u+U'(\rho)e^\mrhob\rho)\|_{\dot H^k} \|\pa_x u\|_{L^\infty }   \Big).
\end{align*}
To control $\|\partial_x (U'(\rho)e^\mrhob\rho)\|_{L^\infty}$, we apply \eqref{eq:nonlocalbound} and \eqref{eq:nonlocalpax} to obtain:
\begin{align*}
\|\partial_x (U'(\rho)e^\mrhob\rho)\|_{L^\infty}&\leq \|U'(\rho)+\rho U''(\rho)\|_{L^\infty}\|\pa_x\rho\|_{L^\infty}\|e^\mrhob\|_{L^\infty}+\|\rho U'(\rho)\|_{L^\infty}\|\pa_x(e^\mrhob)\|_{L^\infty}\\
&\leq C\|U\|_{C^2([0,\rhoM])}(w(0)+\|\pa_x\rho\|_{L^\infty}).
\end{align*}
For $\|U'(\rho)e^\mrhob\rho\|_{\dot H^k}$ we use Lemmas \ref{lem:Leibniz} and \ref{lem:composition}:
\begin{align*}
\|U'(\rho)e^\mrhob\rho\|_{\dot H^k} &\leq C\big( \|\rho U'(\rho)\|_{L^\infty} \|e^\mrhob\|_{ \dot H^k} +\|\rho U'(\rho)\|_{\dot H^k} \|e^\mrhob\|_{L^\infty }\big) \\
&\leq C\|U\|_{C^1([0,\rhoM])} \|e^\mrhob\|_{ \dot H^k} + C(\|U\|_{C^{\lceil k \rceil+1}([0,\rho_M])})\|\rho\|_{\dot H^k}.
\end{align*}
The term $\|e^\mrhob\|_{\dot H^k}$ can be estimated by applying Lemma \ref{lem:composition} with
$g(x)=\rhob(t,x)$ and  $h(x)=e^{-x}$. From \eqref{eq:rhobbound} we know
$\|g\|_{L^\infty}\leq  mw(0)$. Moreover,
$h \in C^\infty([0,  mw(0)])$. Therefore, we have
\[
    \|e^{-\rhob}\|_{\dot H^k} \leq C(k,  mw(0))\|\rhob\|_{\dot H^k},
\]
Putting everything together, we have the following estimate on ${\rm I}_2$:
\begin{equation}\label{eq:I2}
 {\rm I}_2	\leq C \big( 1+\|\pa_x \rho\|_{L^\infty}+\|\pa_x u\|_{L^\infty}  \big)\big(\|u\|_{\dot H^k}^2 + \|\rho\|^2_{\dot{H}^k} + \|\rhob\|^2_{\dot{H}^k}\big),
\end{equation}
where
\[C = C\Big(k, m, \|U\|_{C^{\max\{2,\lceil k \rceil+1\}}([0,\rho_M])}, w(0) \Big).\]

Next, we proceed to the term ${\rm II}$. Apply Leibniz rule (Lemma \ref{lem:Leibniz}): 
\begin{align*}
 {\rm II} &\leq  \|u\|_{\dot H^k} \left\|U(\rho)e^\mrhob \int_0^\infty w'(z) \rho(\cdot+z)(u(\cdot+z)- u(\cdot)) dz  \right\|_{\dot H^k}\\
 &\leq C\|u\|_{\dot H^k}\bigg(\|U(\rho)e^\mrhob\|_{\dot H^k} \left\|\int_0^\infty w'(z) \rho(\cdot+z)(u(\cdot+z)- u(\cdot)) dz  \right\|_{L^\infty}\bigg.\\
 &\qquad\qquad \bigg.+\|U(\rho)e^\mrhob\|_{L^\infty} \left\|\int_0^\infty w'(z) \rho(\cdot+z)(u(\cdot+z)- u(\cdot)) dz  \right\|_{\dot H^k}\bigg).
\end{align*}
Let us further estimate each term on the right-hand side of the inequality above, continuing to use Leibniz rule and the composition estimate (Lemma \ref{lem:composition}) if needed.
\begin{align*}
 \|U(\rho)e^\mrhob\|_{L^\infty}&=\|U(\rho)\|_{L^\infty}\|e^\mrhob\|_{L^\infty}\leq 1,\\
 \|U(\rho)e^\mrhob\|_{\dot H^k}&\leq C (\|U(\rho)\|_{\dot H^k}\|e^\mrhob\|_{L^\infty}+\|U(\rho)\|_{L^\infty}\|e^\mrhob\|_{\dot H^k})\\
 &\leq C(\|U\|_{C^{\lceil k\rceil+1}([0,\rhoM])})\|\rho\|_{\dot H^k} + C(k,  mw(0))\|\rhob\|_{\dot H^k}.
\end{align*}
For the remaining terms, we make use of the fact that $w'(z)\leq0$. The $L^\infty$ bound is given by:
\begin{align*}
 &\left|\int_0^\infty w'(z) \rho(x+z)(u(x+z)- u(x)) dz\right|\\
 &\leq \int_0^\infty (-w'(z)) \rho(x+z) |u(x+z)- u(x)| dz\leq  \int_0^\infty (-w'(z)) \cdot 1\cdot 2\, dz=2w(0).
\end{align*}
For the $\dot H^k$ bound, we apply Minkowski integral inequality and get:
\begin{align*}
&\left\|\int_0^\infty w'(z) \rho(\cdot+z)(u(\cdot+z)- u(\cdot)) dz  \right\|_{\dot H^k}\\
&\leq \int_0^\infty (-w'(z))\Big\|\rho(\cdot+z)(u(\cdot+z)- u(\cdot))\Big\|_{\dot H^k}dz\\
&\leq 2w(0)\cdot C\big(\|\rho\|_{\dot H^k}\|u\|_{L^\infty}+\|\rho\|_{L^\infty}\|u\|_{\dot H^k}\big)\leq C w(0)(\|u\|_{\dot H^k}  + \|\rho\|_{\dot H^k}).
\end{align*}
Overall, the term {\rm II} can be estimated by
\begin{equation}\label{eq:II}	
 {\rm II} \leq C \big(\|u\|_{\dot H^k}^2+\|\rho\|_{\dot H^k}^2+\|\rhob\|_{\dot H^k}^2\big),
\end{equation}
where
\[
C = C\big(k,m, \|U\|_{C^{\lceil k\rceil+1}([0,\rhoM])}, w(0)\big).
\]

Collecting the estimates in \eqref{eq:I1}, \eqref{eq:I2} and \eqref{eq:II}, we deduce the following control of the growth of $\|u\|_{\dot H^k}$:
\begin{equation}\label{eq:e1est}
\frac{1}{2}\frac{d}{dt} \|u\|_{\dot H^k}^2 \leq C \big(1+\|\pa_x \rho\|_{L^\infty} +\|\pa_x u\|_{L^\infty}  \big)\big(\|u\|_{\dot H^k}^2+\|\rho\|_{\dot H^k}^2+\|\rhob\|_{\dot H^k}^2\big),
\end{equation}
where the constant
\[
C=C\big(k,  m, \|U\|_{C^{\max\{2,\lceil k\rceil +1\}}([0,\rho_M])}, w(0) \big).
\]

Next, we proceed similarly with the $\psi$-equation in \eqref{eq:equiv}, acting $\Lambda^k$ and integrating against $\Lambda^k \psi$:
\begin{align*}
\frac{1}{2}\frac{d}{dt} \|\psi\|_{\dot H^k}^2 &= - \int \Lambda^k \psi \cdot\Lambda^k\big(u\pa_x\psi\big) \,dx \\ 
&= -\int u \cdot \Lambda^k\psi\cdot \Lambda^k\pa_x\psi \,dx - \int\Lambda^k\psi \cdot[\Lambda^k, u]\pa_x\psi \,dx =: {\rm III}_1 + {\rm III}_2,
\end{align*}
where we split the right-hand side into two terms (like we did for ${\rm I}$).

We again proceed with each term individually. First, use integration by parts to take care of the dangerous term $\Lambda^k\pa_x\psi$ in ${\rm III}_1$:
\[
{\rm III}_1 = -\int u \cdot \pa_x\left(\frac{\Lambda^k\psi^2}{2}\right)\,dx = \int \pa_x u \cdot \frac{\Lambda^k\psi^2}{2} \,dx\leq \frac12\|\pa_x u\|_{L^\infty}\|\psi\|_{\dot H^k}^2.
\]
Then, for ${\rm III}_2$ we again apply H\"older's inequality and the commutator estimate (Lemma \ref{lem:commutator}):
\begin{align*}
{\rm III}_2 &\leq \|\psi\|_{\dot H^k} \|[\Lambda^k, u]\pa_x\psi\|_{L^2}\leq C\|\psi\|_{\dot H^k} \Big(  \|\partial_x u\|_{L^\infty} \|\pa_x\psi\|_{ \dot H^{k-1}} + \|u\|_{\dot H^k} \|\pa_x\psi\|_{L^\infty } \Big)\\
 & \leq C\Big(\|\pa_x u\|_{L^\infty}+ \|\pa_x\psi\|_{L^\infty} \Big) \big(\|u\|_{\dot H^k}^2+\|\psi\|_{\dot H^k}^2 \big).
\end{align*}
Combining the two estimates yield:
\begin{align}\label{eq:e2est}
    \frac{1}{2}\frac{d}{dt} \|\psi\|_{\dot H^k}^2 \leq C\big(\|\pa_x u\|_{L^\infty} +\|\pa_x \psi\|_{L^\infty} \big)\big(\|u\|_{\dot H^k}^2+\|\psi\|_{\dot H^k}^2\big).
\end{align}

Finally, we put together \eqref{eq:e1est} and \eqref{eq:e2est} and conclude with the energy estimate
\begin{equation}\label{eq:Eest}
E_k'(t) \leq C\big(\|\pa_x \rho\|_{L^\infty} + \|\pa_x u\|_{L^\infty} + \|\pa_x \psi\|_{L^\infty} \big)\big(E_k+\|\rho\|_{\dot H^k}^2+\|\rhob\|_{\dot H^k}^2).
\end{equation}

The next Proposition allows us to absorb $\|\rho\|_{\dot H^k}^2$ by $E_k$ and $\|\rhob\|_{\dot H^k}^2$.

\begin{proposition}\label{prop:rhoHk}
Let $k\geq1$. Then there exists a constant $C>0$, depending on $k, \|\psi_0\|_{L^\infty}, \|U^{-1}\|_{C^{\lceil k\rceil}([U(\rhoM),1])}$ and $w(0)$, such that 
\begin{equation}\label{eq:rhoHk}
\|\rho\|_{\dot H^k} \leq C\big(\|u\|_{\dot H^k}+\|\psi\|_{\dot H^k}+\|\rhob\|_{\dot H^k}\big).
\end{equation}
\end{proposition}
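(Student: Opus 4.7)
The key idea is to invert the relation
\[u = \psi + U(\rho)\, e^\mrhob\]
to express $\rho$ algebraically in terms of $u$, $\psi$, and $\rhob$, and then apply the composition and Leibniz estimates. Specifically, rearranging gives $U(\rho) = (u-\psi)\, e^{\rhob}$, and since $\rho(t,x)\in[0,\rhoM]$ by Proposition \ref{prop:MP2} (or $[0,1]$ under \eqref{A2}), and $U$ is strictly decreasing with $U(\rhoM)>0$, we may write
\[\rho = U^{-1}\bigl((u-\psi)\,e^{\rhob}\bigr).\]

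The plan is then to apply Lemma \ref{lem:composition} with $h=U^{-1}$ and $g=(u-\psi)\,e^{\rhob}$. For this we must verify two things: that $\|g\|_{L^\infty}$ is controlled (which holds since $g=U(\rho)\in[U(\rhoM),1]$), and that $h\in C^{\lceil k\rceil}(\mathrm{Range}(g))$, which is exactly the hypothesis \eqref{eq:Usmoothp} on $U^{-1}$. This yields
\[\|\rho\|_{\dot H^k}\leq C\,\bigl\|(u-\psi)\,e^{\rhob}\bigr\|_{\dot H^k}.\]

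Next I would apply the fractional Leibniz rule (Lemma \ref{lem:Leibniz}) to the product:
\[\bigl\|(u-\psi)\,e^{\rhob}\bigr\|_{\dot H^k}\leq C\bigl(\|u-\psi\|_{L^\infty}\|e^{\rhob}\|_{\dot H^k}+\|u-\psi\|_{\dot H^k}\|e^{\rhob}\|_{L^\infty}\bigr).\]
The factor $\|u-\psi\|_{L^\infty}=\|U(\rho)e^\mrhob\|_{L^\infty}\leq 1$ is immediate (or one can bound it by $1+\|\psi_0\|_{L^\infty}$ using transport of $\psi$). The quantity $\|e^{\rhob}\|_{L^\infty}$ is bounded by $e^{mw(0)}$ from \eqref{eq:rhobbound}, while $\|e^{\rhob}\|_{\dot H^k}\leq C(k,mw(0))\,\|\rhob\|_{\dot H^k}$ follows from a second application of Lemma \ref{lem:composition} with $h(x)=e^x$ restricted to $[0,mw(0)]$ and $g=\rhob$, exactly as was done in the estimate of ${\rm II}$ above. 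Finally $\|u-\psi\|_{\dot H^k}\leq\|u\|_{\dot H^k}+\|\psi\|_{\dot H^k}$ by the triangle inequality.

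Putting these ingredients together yields the desired inequality
\[\|\rho\|_{\dot H^k}\leq C\bigl(\|u\|_{\dot H^k}+\|\psi\|_{\dot H^k}+\|\rhob\|_{\dot H^k}\bigr),\]
with the stated dependence of $C$ on $k$, $\|\psi_0\|_{L^\infty}$, $\|U^{-1}\|_{C^{\lceil k\rceil}([U(\rhoM),1])}$, and $w(0)$. There is no real obstacle here: the only subtle point is ensuring the composition estimate for $U^{-1}$ is applicable, which is precisely why the a priori maximum principle of Proposition \ref{prop:MP2} is invoked to confine the argument of $U^{-1}$ to the interval $[U(\rhoM),1]$ where $U^{-1}$ is smooth.
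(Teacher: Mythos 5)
Your proposal is correct and follows essentially the same route as the paper's proof: invert $u-\psi=U(\rho)e^\mrhob$ to write $\rho=U^{-1}\bigl((u-\psi)e^{\rhob}\bigr)$, apply the composition estimate (Lemma \ref{lem:composition}) using the maximum principle to confine the argument of $U^{-1}$ to $[U(\rhoM),1]$, and then conclude with the fractional Leibniz rule plus a second composition estimate for $e^{\rhob}$. The only difference is that you spell out the bounds on $\|e^{\rhob}\|_{L^\infty}$ and $\|e^{\rhob}\|_{\dot H^k}$ slightly more explicitly, which the paper absorbs into the constant.
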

\begin{proof}
  Beginning with the relationship 
  \[u =\psi+U(\rho)e^\mrhob,\]
  we may write
  \[\rho = U^{-1} \Big( (u-\psi) e^{\rhob}  \Big)\]
  since $U$ is monotone decreasing and hence invertible. We therefore have in light of the composition estimate (Lemma \ref{lem:composition}):
  \[
  \|\rho\|_{\dot H^k} = \left\| U^{-1} \Big( (u-\psi) e^{\rhob}  \Big)\right\|_{\dot H^k} \leq C\| (u-\psi) e^{\rhob} \|_{\dot H^k},
  \]
  where the constant $C$ depends on $\|(u-\psi) e^{\rhob}\|_{L^\infty}$ and $\|U^{-1}\|_{C^{\lceil k\rceil}(Range((u-\psi) e^{\rhob}))}$. Since $(u-\psi) e^{\rhob}=U(\rho)$, we have $\|(u-\psi) e^{\rhob}\|_{L^\infty}\leq1$. The range of $U(\rho)$, under assumption \eqref{A2} or \eqref{A2p}, is $[0,1]$ or $[U(\rhoM),1]$, respectively. Therefore, we have
  \[C=C\big(\|U^{-1}\|_{C^{\lceil k\rceil}([U(\rhoM),1])}\big).\]
  
  We continue the estimate using the Leibniz rule (Lemma \ref{lem:Leibniz}) and obtain:
  \begin{align*}
        \|\rho\|_{\dot H^k} &\leq C\big( \|u-\psi\|_{L^\infty}\|e^{\rhob} \|_{\dot H^k} +\|u-\psi\|_{\dot H^k}\|e^{\rhob} \|_{L^\infty}    \big)
        \leq C\big( \|u \|_{\dot H^k} +\|\psi\|_{\dot H^k} +  \|\rhob\|_{\dot H^k}  \big),
    \end{align*}
  where
    $$C=C\big(k, \|\psi_0\|_{L^\infty}, \|U^{-1}\|_{C^{\lceil k\rceil}([U(\rhoM),1])}, w(0)\big).$$
\end{proof}

Next, we estimate the nonlocal term $\|\rhob\|_{\dot H^k}$ by $\|\rho\|_{\dot H^{k-1}}$. For any $k\geq1$, we have
\begin{align}
   \|\rhob\|_{\dot H^k} &= \|\pa_x\Lambda^{k-1} \rhob \|_{L^2} = \left \|\int_0^\infty w(z)\pa_x\Lambda^{k-1} \rho(\cdot+z) dz \right \|_{L^2}\nonumber\\
   &= \left \|-w(0)\Lambda^{k-1}\rho-\int_0^\infty w'(z)\Lambda^{k-1} \rho(\cdot+z) dz \right \|_{L^2}\nonumber\\
   &\leq w(0)\|\rho\|_{\dot H^{k-1}}+\int_0^\infty (-w'(z))\big\|\Lambda^{k-1}\rho(\cdot+z)\big\|_{L^2}dz=2w(0)\|\rho\|_{\dot H^{k-1}}\label{eq:rhobHk}.
\end{align}

Applying \eqref{eq:rhobHk} to \eqref{eq:rhoHk}, we arrive at the following bound:
\[
\|\rho\|_{\dot H^k} \leq C_0\big(\|u\|_{\dot H^k}+\|\psi\|_{\dot H^k}+\|\rho\|_{\dot H^{k-1}}\big).
\]
Together with the interpolation inequality
\[
 \|\rho\|_{\dot H^{k-1}}\leq \frac{1}{2C_0}\|\rho\|_{\dot H^k} + C\|\rho\|_{L^2},
\]
we obtain the estimate
\begin{equation}\label{eq:rhoHk2}
\|\rho\|_{\dot H^k}\leq C\big(\|u\|_{\dot H^k}+\|\psi\|_{\dot H^k}+\|\rho\|_{L^2}\big),	
\end{equation}
for any $k\geq1$. Consequently, the last two terms in \eqref{eq:Eest} can be controlled by
\begin{align*}
 \|\rho\|_{\dot H^k}^2+\|\rhob\|_{\dot H^k}^2 &\leq C\big(\|\rho\|_{\dot H^k}^2+\|\rho\|_{\dot H^{k-1}}^2\big)\leq C\big(\|\rho\|_{\dot H^k}^2+\|\rho\|_{L^2}^2\big)\\
 &\leq C\big(\|u\|_{\dot H^k}^2+\|\psi\|_{\dot H^k}^2+\|\rho\|_{L^2}^2\big).
\end{align*}
We conclude with the estimate \eqref{eq:energy}. This finishes the proof of Theorem \ref{thm:energy}.

\bibliographystyle{abbrv}
\bibliography{traffic}

\end{document}